\newtheorem{theorem}{Theorem}[section]
\newtheorem{lemma}{Lemma}[section]
\newtheorem{proposition}{Proposition}[section]
\newtheorem{definition}{Definition}[section]
\newtheorem{remark}{Remark}[section]
\newcommand{\bal}{\begin{align}}
\newcommand{\bbal}{\begin{align*}}
\newcommand{\beq}{\begin{equation}}
\newcommand{\eeq}{\end{equation}}
\newcommand{\bca}{\begin{cases}}
\newcommand{\eca}{\end{cases}}
\newcommand{\pa}{\partial}
\newcommand{\fr}{\frac}
\newcommand{\na}{\nabla}
\newcommand{\De}{\Delta}
\newcommand{\cd}{\cdot}
\newcommand{\dd}{\mathrm{d}}
\newcommand{\R}{\mathbb{R}}
\newcommand{\les}{\lesssim}
\newcommand{\F}{\dot{F}}
\begin{document}
\bibliographystyle{plain}
\title{Well-posedness and no-uniform dependence for the Euler-Poincar\'{e} equations in
Triebel-Lizorkin spaces}

\author{Yuanhua Zhong$^{1}$,  Jianzhong Lu$^{2,}$\footnote{E-mail: 3127781939@qq.com; louisecan@163.com(Corresponding author); limin@jxufe.edu.cn; lijinlu@gnnu.edu.cn
},  Min Li$^{3}$, Jinlu Li$^{1}$\\
\small $^1$ School of Mathematics and Computer Sciences, Gannan Normal University, Ganzhou 341000, China\\
\small $^2$  School of Mathematics and Information, Xiangnan University, Chenzhou, 423000, China
\\
\small $^3$ Department of Mathematics, Jiangxi University of Finance and Economics, Nanchang 330032, China
}

\date{\today}

\maketitle\noindent{\hrulefill}

{\bf Abstract:} In this paper, we study the Cauchy problem of the Euler-Poincar\'{e} equations in $\R^d$ with initial data belonging to the Triebel-Lizorkin spaces. We prove the local-in-time unique existence of solutions to the Euler-Poincar\'{e} equations in $F^s_{p,r}(\R^d)$. Furthermore, we obtain that the data-to-solution of this equation is continuous but not uniformly continuous in these spaces.

{\bf Keywords:} Euler-Poincar\'{e} equations; Well-posedness; Triebel-Lizorkin spaces

{\bf MSC (2010): 35Q35}
\vskip0mm\noindent{\hrulefill}

\thispagestyle{empty}
\section{Introduction}
\quad

In this paper, we consider the following Cauchy problem of Euler-Poincar\'{e} equations:
\begin{equation}\label{E-P}\tag{EP}
\begin{cases}
\partial_tm+u\cdot \nabla m+\nabla u^T\cd m+(\mathrm{div} u)m=0, \qquad (t,x)\in \R^+\times \R^d,\\
m=(1-\De)u,\qquad (t,x)\in \R^+\times \R^d,\\
u(0,x)=u_0,\qquad x\in \R^d,
\end{cases}
\end{equation}
where $u=(u_1,u_2,\cdots,u_d)$ denotes the velocity of the fluid, $m=(m_1,m_2,\cdots,m_d)$ represents the momentum. Equivalently, we can write \eqref{E-P} in components,
$$\partial_tm_i+\sum^d_{j=1}u_j\partial_{x_j}m_i+\sum^d_{j=1}(\partial_{x_i}u_j)m_j+m_i\sum^d_{j=1}
\partial_{x_j}u_j=0,~~~~~~~~i=1,2,\cdots,d.$$
According to Yan-Yin \cite{yy}, we can transform the first equation of \eqref{E-P} into the following
nonlocal transport form:
\begin{align}\label{E-P1}
\partial_tu+u\cdot \nabla u= Q(u,u)+R(u,u):=P(u,u),
\end{align}
where
\bal\label{n-Puv}
\begin{cases}
Q(u,v)=-(1-\De)^{-1}\mathrm{div}\Big(\nabla u\nabla v+\nabla u(\nabla v)^T-(\nabla u)^T\nabla v
 -(\mathrm{div} u)\nabla v+\frac12\mathbf{I}(\nabla u:\nabla v)\Big),
\\R(u,v)=-(1-\De)^{-1}\Big((\mathrm{div} u)v+ (\na u)^T\cd v\Big),
\end{cases}
\end{align}
with
\bbal
&(\na u^T)_{i,j}=\pa_{x_i}u_j, \quad (u\cd \na v)_i=\sum^d_{k=1}u_k\pa_{x_k}u_i, \quad (\na u\na v)_{ij}=\sum^d_{k=1}\pa_{x_i}u_k\pa_{x_k}v_j,
\\&\na u:\na v=\sum^d_{i,j=1}\pa_{x_i}u_j\pa_{x_i}v_j, \quad \big((\na u)^T\cd v\big)_i=\sum^d_{j=1}\pa_{x_i}u_jv_j.
\end{align*}

The system \eqref{E-P} was first introduced by Holm et al. in \cite{hmr1,hmr2} as a higher dimensional generalization of the following Camassa-Holm equation (CH):
\begin{align}\label{C-H}\tag{CH}
m_{t}+um_{x}+2u_{x}m=0, \ m=u-u_{xx}.
\end{align}
Indeed, when $d=1$, the Euler-Poincar\'{e} equations reduce to \eqref{C-H}.
The celebrated Camassa-Holm equation \eqref{C-H} was originally discovered by Camassa and Holm \cite{ch} as a bi-Hamiltonian system for modeling and analyzing the nonlinear shallow water waves. It is worth mentioning that the CH equation has peakon solutions of the form $Ce^{-|x-Ct|}\,(C>0)$, which aroused a lot of interest in physics; see for instance \cite{c5,t}. Nowadays, the research on the existence of solutions to the CH equation, their analytical and geometric properties, as well as the theory of well-posedness, has yielded fruitful results. The existence of global weak solutions and dissipative solutions was investigated in \cite{bc1,bc2,xz1} and the references therein. Blow-up phenomena and global existence of strong solutions were discussed in \cite{c2,ce2,ce3,ce4}, while the non-uniform continuity of the CH equation was considered in \cite{hk,hkm,lyz1,lwyz}. Moreover, in \cite{ce2,d2,glmy},
the local well-posedness and ill-posedness for the Cauchy problem of the CH equation were studied. 

The first rigorous analysis work of the Cauchy problem for the Euler-Poincar\'{e} equations \eqref{E-P} is attributed to Chae and Liu \cite{cli}. They eatablished the local existence of weak solutions in $W^{2,p}(\R^d)$ with $p>d$, and proved the local existence of unique classical solutions in $H^{s}(\R^d)$ with $s>d/2+3$. Soon after, Yan and Yin \cite{yy} further discussed the local existence and uniqueness of the solutions to \eqref{E-P} in Besov spaces. On the other hand, Li et al. \cite{lyzz} proved that the solutions to \eqref{E-P} with a large class of smooth initial data blow up in finite time or exist globally in time, which settled an open problem raised by Chae and Liu \cite{cli}. Recently, Luo and Yin \cite{luoy} obtained a new blow-up result in the periodic case by using the rotational invariant properties of the equation. For more results about Euler-Poincar\'{e} equations, we refer to \cite{hs,zyl}.

In recent years, as part of the well-posedness, the continuity properties of the data-to-solution map for Camassa-Holm type equations has received increasing interest from scholars. The related research was initiated by Li-Yu-Zhu in \cite{lyz1}. Later, Li et al. \cite{ldz} proved that for the initial data in $H^{s}(\R^d)$ with $s>1+d/2$, the corresponding solution to \eqref{E-P} is not uniformly continuous dependent. This non-uniformly continuous result was extended to Besov space $B^s_{p,r}(\R^d),s>\max\{1+d/2,3/2\}$ in \cite{ldl}. Very recently, based on Galilean boost method and the symmetric structure, Li and Liu proved in \cite{ll} that the data-to-solution map of the Euler-Poincar\'{e} equations \eqref{E-P} is not uniformly continuous on any open subset $U\subset B^s_{p,r}(\R^d)$ with $s>\max\{1+d/2,3/2\}$.

In the present paper, inspired by \cite{lyz-novi}, we consider the Cauchy problem \eqref{E-P} in Triebel-Lizorkin spaces $F^s_{p,r}(\mathbb{R}^d)$, and finally deduce the well-posedness and no-uniform dependence. To state our main result precisely, we first recall the notion of well-posedness in the sense of Hadamard:
\begin{definition}\label{hloc}
  (Local Well-posedness) The Cauchy problem (\ref{E-P}) is said to be locally well-posed in a Banach space $X$ if the following three conditions hold
\begin{enumerate}
\item (Local existence) For any initial data $u_0\in X$, there exists a short time $T=T(u_0)$ and a solution $u(t)\in\mathcal{C}([0,T),X)$ to the Cauchy problem (\ref{E-P});
\item (Uniqueness) This solution $u(t)$ is unique in the space $\mathcal{C}([0,T),X)$;
\item (Continuous Dependence) The data-to-solution map $u_0\mapsto\mathbf{S}_t(u_0)$ is continuous in the following sense: for any $T_1< T$ and $\epsilon>0$, there exists $\delta >0$, such that if $\|u_0-\widetilde{u}_0\|_X<\delta$, then $\mathbf{S}_t(\widetilde{u}_0)$ exists up to $T_1$ and
$$\|\mathbf{S}_t(u_0)-\mathbf{S}_t(\widetilde{u}_0)\|_{\mathcal{C}([0,T_1],X)}<\epsilon.$$
\end{enumerate}
\end{definition}

Now we can present our main result in this paper.
\begin{theorem}\label{the1}
Let $d\geq 1$ and $1< p, r<\infty$. Assume that $u_0\in F^{s}_{p,r}(\R^d)$ with $s>\max\{\frac32,1+\frac dp\}$. Then there exists some time $T>0$ such that
\begin{enumerate}
  \item system \eqref{E-P1} has a solution $u\in E^{s}_{p,r}(T):=\mathcal{C}([0,T]; F^s_{p,r})\cap \mathcal{C}^1{([0,T]; F^{s-1}_{p,r})}$;
  \item the solutions of \eqref{E-P1} are unique;
  \item the data-to-solution map $u_0 \mapsto u(t)$ is continuous from any bounded
subset of $u_0\in F^s_{p,r}$ into $\mathcal{C}([0,T],F^s_{p,r})$;
\item the data-to-solution map $u_0 \mapsto u(t)$ is not uniformly continuous from any bounded subset of $u_0\in F^s_{p,r}$ into $\mathcal{C}([0,T],F^s_{p,r})$.
\end{enumerate}
\end{theorem}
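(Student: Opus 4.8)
The proof of Theorem \ref{the1} naturally splits into the well-posedness part (items 1--3) and the non-uniform-dependence part (item 4), and I would organize it in that order. For the well-posedness, the plan is to use the transport-equation formulation \eqref{E-P1} together with Littlewood--Paley theory and the transport estimates in Triebel--Lizorkin spaces. First I would set up a sequence of linear iterates: let $u^{(0)}=S_1 u_0$ (or simply $u^{(0)}\equiv 0$) and solve inductively
\begin{align*}
\partial_t u^{(n+1)}+u^{(n)}\cdot\nabla u^{(n+1)}=P(u^{(n)},u^{(n)}),\qquad u^{(n+1)}(0,x)=u_0(x).
\end{align*}
Using the $F^s_{p,r}$ transport estimate (commutator estimate in Triebel--Lizorkin spaces, which for $p,r\in(1,\infty)$ follows just as in the Besov case since the only special tool needed is the Fefferman--Stein maximal inequality, valid exactly when $1<p,r<\infty$), plus the fact that $Q$ and $R$ gain one derivative through $(1-\Delta)^{-1}$, one shows $P:F^s_{p,r}\times F^s_{p,r}\to F^s_{p,r}$ is bounded and that the iterates stay in a ball of $E^s_{p,r}(T)$ for $T$ small depending only on $\|u_0\|_{F^s_{p,r}}$. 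Then I would prove the sequence is Cauchy in the weaker norm $\mathcal{C}([0,T];F^{s-1}_{p,r})$, interpolate with the uniform $F^s_{p,r}$ bound to get convergence in $\mathcal{C}([0,T];F^{s'}_{p,r})$ for $s'<s$, and recover the solution in $E^s_{p,r}(T)$ via the Fatou property of Besov--Triebel spaces together with a continuity-in-time argument using the equation to bootstrap $\mathcal{C}^1([0,T];F^{s-1}_{p,r})$.

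Uniqueness and continuous dependence (item 3) follow from the same machinery: two solutions satisfy a transport equation for $u-\widetilde u$ with right-hand side controlled by $\|u-\widetilde u\|_{F^{s-1}_{p,r}}$ times the $F^s_{p,r}$ norms, giving a Gronwall bound; continuous dependence in the strong topology then follows from the Bona--Smith–type approximation argument (mollify the data, use continuity of the flow for smooth data, and the uniform bounds to close), exactly as carried out for the Besov case in the references cited in the introduction, since no property beyond $1<p,r<\infty$ is used.

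For item 4, the non-uniform dependence, the plan is to construct two sequences of initial data that are bounded in $F^s_{p,r}$ and converge to each other but whose solutions stay bounded apart. Following the Galilean-boost / high-low frequency approach used in \cite{ll,ldl}, I would take
\begin{align*}
u_0^{n,\pm}(x)=n^{-s}\,2^{-n}\,\mathbf{v}\,\phi(x)\cos\!\big(2^n x_1\big)\ \pm\ n^{-1}\,\mathbf{w}\,\psi(x),
\end{align*}
where $\mathbf{v},\mathbf{w}$ are suitable constant vectors (chosen so that the quadratic nonlinearity does not annihilate the interaction — this is where the precise structure of $P$ in \eqref{n-Puv} matters and where the Galilean symmetry of \eqref{E-P1} is exploited to simplify the transport term), and $\phi,\psi$ are fixed bump functions. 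One checks $\|u_0^{n,+}-u_0^{n,-}\|_{F^s_{p,r}}\to 0$ while $\|u_0^{n,\pm}\|_{F^s_{p,r}}\lesssim 1$. Then, using the well-posedness from items 1--3 to get a solution on a uniform time interval and an error estimate $\|u^{n,\pm}(t)-u_0^{n,\pm}-tP(u_0^{n,\pm},u_0^{n,\pm})\|_{F^s_{p,r}}=o(1)$, one computes the leading term of the solution at time $t$ and shows
\begin{align*}
\liminf_{n\to\infty}\ \big\|u^{n,+}(t)-u^{n,-}(t)\big\|_{F^s_{p,r}}\ \gtrsim\ |t|>0
\end{align*}
for small $t$, contradicting uniform continuity.

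The main obstacle I anticipate is twofold. On the well-posedness side, it is establishing the commutator/transport estimate in $F^s_{p,r}$ with the correct dependence — the Triebel--Lizorkin norm mixes the $\ell^r$ sum inside the $L^p$ norm, so one must be careful that the standard Besov-space commutator argument carries over; the restriction $1<p,r<\infty$ is precisely what makes the vector-valued maximal inequality available, so this step dictates the hypotheses. On the non-uniform-dependence side, the delicate point is computing the $F^s_{p,r}$ norm of the approximate solution's leading term: unlike in Besov spaces, one cannot simply read off the norm from a single dyadic block, so I would isolate the high-frequency piece $\sim 2^n$ and estimate its $F^s_{p,r}$ norm directly via the definition, showing the low-frequency "boost" factor $n^{-1}\mathbf{w}\psi$ produces, through the nonlinear interaction, a term of size $\gtrsim |t|$ at frequency $2^n$ that does not cancel. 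Controlling the remainder (the $o(1)$ error) uniformly in $n$ requires the a priori bound from part 1 applied on a time interval independent of $n$, which is legitimate since the data are uniformly bounded.
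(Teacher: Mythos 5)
Your plan for items 1--3 is essentially the paper's: a linear iteration, uniform bounds from the Triebel--Lizorkin transport estimate, convergence in a weaker norm plus interpolation, and a frequency-truncation (Bona--Smith type) argument for continuous dependence. Two small differences: the paper seeds the iteration with the truncated data $S_{n+1}u_0$ (so every iterate lies in $F^{\infty}_{p,r}$ and the telescoping initial data supply the $2^{-n}$ gain in the contraction step), and it runs the Cauchy-sequence and uniqueness estimates in the Besov space $B^{s-1}_{p,\infty}$ rather than in $F^{s-1}_{p,r}$, which lets it quote off-the-shelf Besov product and transport lemmas at the low regularity $s-1$ instead of re-deriving them in the Triebel--Lizorkin scale. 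Neither difference is essential.

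For item 4 your strategy (first-order Taylor expansion of the flow, two families of data differing by a low-frequency bump, and a high-low interaction surviving at frequency $2^n$) is exactly the paper's, but two details as written would break the argument. First, the amplitude of your high-frequency piece is mis-scaled: with $u_0^{n,\pm}\sim n^{-s}2^{-n}\phi(x)\cos(2^nx_1)$ the $F^s_{p,r}$ norm of that piece is comparable to $2^{ns}\cdot n^{-s}2^{-n}=n^{-s}2^{n(s-1)}\to\infty$ for $s>1$, so the data are not bounded in $F^s_{p,r}$ and neither the uniform existence time nor the $O(t^{2})$ remainder bound is available; the correct normalization is $2^{-ns}$ (the paper takes $f_n=2^{-ns}\phi(x_1)\sin(\tfrac{17}{12}2^nx_1)\phi(x_2)\cdots\phi(x_d)$ and perturbs by $g_n=2^{-n}\phi(x_1)\cdots\phi(x_d)$). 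Second, your stated error estimate $\|u^{n,\pm}(t)-u_0^{n,\pm}-tP(u_0^{n,\pm},u_0^{n,\pm})\|_{F^s_{p,r}}=o(1)$ omits the transport term: the correct expansion is $u(t)\approx u_0-t\big(u_0\cdot\nabla u_0-P(u_0,u_0)\big)$, and it is precisely the transport contribution $g_n\,\partial_{x_1}f_n$ --- not the nonlocal part $P$, whose difference is only $O(2^{-n})$ thanks to the smoothing of $(1-\Delta)^{-1}$ --- that is supported at frequency $\approx 2^n$ with $F^s_{p,r}$ norm bounded below by a constant (via Riemann--Lebesgue) and hence produces the $\gtrsim t$ separation. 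With these two corrections your outline matches the paper's Step 6 and Proposition 3.1.
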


\begin{remark}
In \cite{yang2015}, the existence and uniqueness of the CH equation have been studied well, while the continuous and non-uniform continuous dependence with respect to initial data seem empty. Note that when $d=1$, the Euler-Poincar\'{e} equations are just the Camassa-Holm equations. Therefore, our result in Theorem \ref{the1} extend and improve the corresponding result in \cite{yang2015}.
\end{remark}

\section{Preliminaries}

In this section, we provide some basic facts about Littlewood-Paley decomposition, nonhomogeneous Besov spaces, nonhomogeneous Triebel-Lizorkin spaces, and their useful properties. Also, we recall  the transport equation theory on Besov spaces and Triebel-Lizorkin spaces, which will be used in the sequel. For more details, the readers may consult monographs \cite{bcd,Triebel}.

\begin{proposition}[\cite{bcd}]
There exists a couple of smooth functions $(\chi,\varphi)$ valued in $[0,1]$, such that $\chi$ is supported in the ball $\mathcal{B}\triangleq \{\xi\in\mathbb{R}^d:|\xi|\leq \frac 4 3\}$, and $\varphi$ is supported in the ring $\mathcal{C}\triangleq \{\xi\in\mathbb{R}^d:\frac 3 4\leq|\xi|\leq \frac 8 3\}$. Moreover,
$$\forall\,\ \xi\in\mathbb{R}^d,\,\ \chi(\xi)+{\sum\limits_{q\geq0}\varphi(2^{-q}\xi)}=1,$$
$$\textrm{supp}\,\ \varphi(2^{-q}\cdot)\cap \textrm{supp}\,\ \varphi(2^{-p}\cdot)=\emptyset,\,\ if\,\ |q-p|\geq 2,$$
$$\textrm{supp}\,\ \chi(\cdot)\cap \textrm{supp}\,\ \varphi(2^{-q}\cdot)=\emptyset,\,\ if\,\ q\geq 1.$$
Then for all $u \in \mathcal{S}'$, we can define the nonhomogeneous dyadic blocks as follows. Let
$$\Delta_q{u}\triangleq 0,\,\ if\,\ q\leq -2,\quad
\Delta_{-1}{u}\triangleq \chi(D)u=\mathcal{F}^{-1}(\chi \mathcal{F}u),$$
$$\Delta_q{u}\triangleq \varphi(2^{-q}D)u=\mathcal{F}^{-1}(\varphi(2^{-q}\cdot)\mathcal{F}u),\,\ if \,\ q\geq 0.$$
Hence, $ u={\sum\limits_{q\in\mathbb{Z}}}\Delta_q{u}$ in $\mathcal{S}'(\mathbb{R}^d)$ is called the nonhomogeneous Littlewood-Paley decomposition of $u$.
\end{proposition}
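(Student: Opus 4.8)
The plan is to construct the pair $(\chi,\varphi)$ explicitly from a single radial cut-off and then verify each listed property by elementary support bookkeeping. First I would fix a smooth radial function $\chi$ valued in $[0,1]$ that is non-increasing in $|\xi|$, identically $1$ on the ball $\{|\xi|\le\frac34\}$ and identically $0$ on $\{|\xi|\ge\frac43\}$; such a function exists by the standard mollification of the indicator of a ball. I then set
$$\varphi(\xi):=\chi(\xi/2)-\chi(\xi).$$
Since $|\xi/2|\le|\xi|$ and $\chi$ is non-increasing in $|\xi|$, we have $\chi(\xi/2)\ge\chi(\xi)$, whence $0\le\varphi\le\chi(\xi/2)\le1$; thus $\varphi$ is also valued in $[0,1]$ and is manifestly smooth and radial.

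Next I would pin down the supports. For the ball condition, $\chi$ vanishes for $|\xi|\ge\frac43$ by construction, so $\mathrm{supp}\,\chi\subset\mathcal B$. For $\varphi$: if $|\xi|\le\frac34$ then both $\chi(\xi)=1$ and $\chi(\xi/2)=1$ (as $|\xi/2|\le\frac38\le\frac34$), so $\varphi(\xi)=0$; if $|\xi|\ge\frac83$ then $\chi(\xi)=0$ and $\chi(\xi/2)=0$ (as $|\xi/2|\ge\frac43$), so again $\varphi(\xi)=0$. Hence $\mathrm{supp}\,\varphi\subset\mathcal C=\{\frac34\le|\xi|\le\frac83\}$. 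Rescaling, $\varphi(2^{-q}\cdot)$ is supported in the annulus $\{\frac34\,2^{q}\le|\xi|\le\frac83\,2^{q}\}$. The two disjointness claims then reduce to arithmetic on these radii: for $q\ge1$ the inner radius $\frac34\,2^{q}\ge\frac32>\frac43$ exceeds the outer radius of $\mathcal B$, giving $\mathrm{supp}\,\chi\cap\mathrm{supp}\,\varphi(2^{-q}\cdot)=\emptyset$; and for $|q-p|\ge2$, say $p\ge q+2$, the inner radius $\frac34\,2^{p}\ge3\cdot2^{q}$ of the $p$-th annulus exceeds the outer radius $\frac83\,2^{q}$ of the $q$-th one, so the two annuli are disjoint.

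The partition-of-unity identity I would obtain by telescoping. By the very definition of $\varphi$,
$$\chi(\xi)+\sum_{q=0}^{N}\varphi(2^{-q}\xi)=\chi(\xi)+\sum_{q=0}^{N}\big(\chi(2^{-q-1}\xi)-\chi(2^{-q}\xi)\big)=\chi(2^{-N-1}\xi),$$
the intermediate terms cancelling in pairs. Letting $N\to\infty$ and using $\chi(2^{-N-1}\xi)\to\chi(0)=1$ for each fixed $\xi$ yields $\chi(\xi)+\sum_{q\ge0}\varphi(2^{-q}\xi)=1$ pointwise, which is exactly the stated identity.

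Finally, to justify that the associated blocks recover $u$, I would read this identity through the functional calculus: the partial sum $\sum_{q\le N}\Delta_q u$ has symbol $\chi(2^{-N}\cdot)$, i.e. equals $\chi(2^{-N}D)u$. For any test function $\phi\in\mathcal S$, since $\chi$ is even one has $\langle\chi(2^{-N}D)u,\phi\rangle=\langle u,\chi(2^{-N}D)\phi\rangle$, and $\chi(2^{-N}D)\phi\to\phi$ in $\mathcal S$ because the symbols tend to $1$ with all derivatives controlled uniformly in $N$; hence $\sum_{q\le N}\Delta_q u\to u$ in $\mathcal S'$, so that $u=\sum_{q\in\mathbb Z}\Delta_q u$ in $\mathcal S'(\mathbb R^d)$. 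The only point requiring genuine care is this last convergence step — the support bookkeeping is routine, but one must confirm that the Fourier-multiplier operators $\chi(2^{-N}D)$ act continuously on $\mathcal S$ uniformly in $N$, so that the limit can be passed inside the duality pairing.
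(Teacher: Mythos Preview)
The paper does not prove this proposition at all; it is simply quoted from \cite{bcd} as a standard preliminary fact. Your construction is correct and is essentially the classical one found in that reference: define $\varphi(\xi)=\chi(\xi/2)-\chi(\xi)$ from a single radial cut-off, read off the supports, telescope to get the partition of unity, and pass to $\mathcal{S}'$ by duality. One cosmetic slip: in your final paragraph the partial sum $\sum_{q\le N}\Delta_q u$ has symbol $\chi(2^{-N-1}\cdot)$ rather than $\chi(2^{-N}\cdot)$, by the very telescoping identity you wrote just above; this does not affect the argument.
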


\begin{definition}[\cite{bcd}]\label{de-b}
Let $s>0, 1< p,q\leq\infty$. The nonhomogeneous Besov space $B^s_{p,q}(\mathbb{R}^d)$ is defined by
$$B^s_{p,q}(\mathbb{R}^d)\triangleq \{f \in \mathcal{S}'(\mathbb{R}^d):||f||_{B^s_{p,q}(\mathbb{R}^d)}\triangleq \big|\big|(2^{qs}||\Delta_q{f}||_{L^p(\mathbb{R}^d)})_{q\geq-1}\big|\big|_{\ell^q}< \infty\},$$
and $ B^\infty_{p,q}(\mathbb{R}^d)\triangleq \cap_{s\in\mathbb{R}}B^s_{p,q}(\mathbb{R}^d).$
\end{definition}

\begin{definition}[\cite{Triebel}]\label{de-t}
Let $s>0, 1<p,r<\infty$. The nonhomogeneous Triebel-Lizorkin space $F^s_{p,r}(\mathbb{R}^d)$ is defined by
$$F^s_{p,r}(\mathbb{R}^d)\triangleq \{f \in \mathcal{S}'(\mathbb{R}^d): ||f||_{L^p(\R^d)}+||f||_{\F^s_{p,r}(\mathbb{R}^d)}< \infty\},$$
with
$$||f||_{\F^s_{p,r}(\mathbb{R}^d)}\triangleq \left\|\left(\sum_{j \in \mathbb{Z}} 2^{j s r}\left|\Delta_j f\right|^r\right)^{\frac{1}{r}}\right\|_{L^p(\R^d)}.$$
\end{definition}

\begin{remark}\label{re-bt}
Let $s,t>0, 1<p,r<\infty$ and $1< q\leq \infty$. It should be emphasized that the following embedding will be often used implicity:
\begin{align*}
&B^s_{p,q}(\R^d)\hookrightarrow B^t_{p,q'}(\R^d)\quad\text{for}\;s>t\quad\text{or}\quad s=t,1< q\leq q'\leq\infty,
\\&F^s_{p,r}(\R^d)\hookrightarrow F^t_{p,r'}(\R^d)\quad\text{for}\;s>t\quad\text{or}\quad s=t,1< r\leq r'<\infty,
\\&F^s_{p,r}(\R^d)\hookrightarrow B^s_{p,\infty}(\R^d).
\end{align*}
\end{remark}

\begin{lemma}[\cite{bcd,Triebel}]\label{em-infi}
Let $s>\frac1p$ and $(p,r,q) \in(1, \infty) \times(1, \infty) \times (1, \infty]$. Then, there exists a constant $C$ such that
\bbal
||f||_{L^\infty(\R^d)}\leq C||f||_{B^s_{p,q}}, \qquad ||f||_{L^\infty(\R^d)}\leq C||f||_{B^s_{p,r}}.
\end{align*}
\end{lemma}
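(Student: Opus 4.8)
The plan is to exploit the nonhomogeneous Littlewood--Paley decomposition $f=\sum_{j\ge-1}\Delta_j f$ together with the frequency localization of each dyadic block, thereby reducing the $L^\infty$ bound to a weighted summation of the $L^p$ norms $\|\Delta_j f\|_{L^p}$ that enter the definition of $B^s_{p,q}$. The only analytic input beyond the triangle inequality is a Bernstein-type inequality for frequency-localized functions, which is standard in the Littlewood--Paley theory recorded in \cite{bcd,Triebel}.

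First I would write, using the triangle inequality in $L^\infty$ on the decomposition of $f$,
$$\|f\|_{L^\infty}\le\sum_{j\ge-1}\|\Delta_j f\|_{L^\infty}.$$
Since $\Delta_{-1}f$ has Fourier support in a ball and each $\Delta_j f$ with $j\ge0$ has Fourier support in an annulus of size $\sim 2^{j}$, Bernstein's inequality provides a constant $C$ such that $\|\Delta_j f\|_{L^\infty}\le C\,2^{jd/p}\|\Delta_j f\|_{L^p}$ for every $j\ge-1$. I would then split the gain factor as $2^{jd/p}=2^{-j(s-d/p)}\,2^{js}$ and apply H\"older's inequality in the index $j$ with conjugate exponents $q$ and $q'$:
\begin{align*}
\|f\|_{L^\infty}
&\le C\sum_{j\ge-1}2^{-j(s-d/p)}\big(2^{js}\|\Delta_j f\|_{L^p}\big)
\\&\le C\Big(\sum_{j\ge-1}2^{-jq'(s-d/p)}\Big)^{1/q'}\Big(\sum_{j\ge-1}2^{jsq}\|\Delta_j f\|_{L^p}^{q}\Big)^{1/q}.
\end{align*}
The second factor is precisely $\|f\|_{B^s_{p,q}}$, while the first is a geometric series in $j$.

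The series in the first factor converges exactly because the threshold hypothesis makes the exponent $s-d/p$ positive, so the terms decay geometrically as $j\to+\infty$ and the finitely many contributions near $j=-1$ are harmless; in the endpoint case $q=\infty$ one takes $q'=1$ and replaces the second factor by $\sup_{j\ge-1}2^{js}\|\Delta_j f\|_{L^p}=\|f\|_{B^s_{p,\infty}}$. This yields the first inequality $\|f\|_{L^\infty}\le C\|f\|_{B^s_{p,q}}$. The second inequality is obtained by the identical argument with $r\in(1,\infty)$ playing the role of $q$ and $r'$ its conjugate, giving $\|f\|_{L^\infty}\le C\|f\|_{B^s_{p,r}}$. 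There is no genuine obstacle in this lemma: the single point requiring attention is the convergence of the geometric summation, which is guaranteed by the condition on $s$ (equivalently, the positivity of $s-d/p$), and the low-frequency block $\Delta_{-1}f$ is controlled trivially as a single bounded term.
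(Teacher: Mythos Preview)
The paper does not supply its own proof of this lemma; it is quoted directly from the references \cite{bcd,Triebel}. Your argument---Bernstein's inequality on each dyadic block followed by H\"older's inequality in the index $j$---is exactly the standard proof found in those sources, and it is correct.

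One point worth flagging: your computation shows that the geometric series $\sum_{j\ge-1}2^{-jq'(s-d/p)}$ converges precisely when $s>d/p$, whereas the lemma as printed in the paper assumes only $s>1/p$. These agree when $d=1$ but not otherwise; the correct threshold for the embedding $B^s_{p,q}(\R^d)\hookrightarrow L^\infty(\R^d)$ is $s>d/p$, so the hypothesis stated in the paper is almost certainly a typo. This does not affect anything downstream, since the paper only invokes the lemma under the stronger standing assumption $s>1+d/p$.
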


\begin{lemma}[\cite{bcd,Triebel}]\label{le-op}
Let $s>0$ and $(p,r,q) \in(1, \infty) \times(1, \infty) \times (1, \infty]$. There exists a positive constant $C$ such that
\bbal
&||(1-\De)^{-1}f||_{F^s_{p,r}(\mathbb{R}^d)}\leq ||f||_{F^{s-2}_{p,r}(\mathbb{R}^d)}, \qquad ||\na f||_{F^s_{p,r}(\mathbb{R}^d)}\leq ||f||_{F^{s+1}_{p,r}(\mathbb{R}^d)},
\\&||(1-\De)^{-1}f||_{B^s_{p,q}(\mathbb{R}^d)}\leq ||f||_{B^{s-2}_{p,q}(\mathbb{R}^d)}, \qquad ||\na f||_{B^s_{p,q}(\mathbb{R}^d)}\leq ||f||_{B^{s+1}_{p,q}(\mathbb{R}^d)}.
\end{align*}
\end{lemma}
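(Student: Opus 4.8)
The plan is to treat all four inequalities as instances of a single mapping principle: a Fourier multiplier whose symbol behaves like $|\xi|^{m}$ on each dyadic annulus $\{|\xi|\sim 2^j\}$ shifts the regularity index by $-m$. Here $(1-\De)^{-1}$ has symbol $(1+|\xi|^2)^{-1}\sim 2^{-2j}$ (so $m=-2$) and $\na$ has symbol $i\xi\sim 2^{j}$ (so $m=1$). The first step is to localize and rescale. I fix a smooth $\widetilde\varphi$ supported in an annulus, equal to $1$ on $\mathrm{supp}\,\varphi$, so that $\widetilde\varphi(2^{-j}D)\De_j=\De_j$ for $j\ge0$. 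Writing $(1-\De)^{-1}\De_jf=2^{-2j}\,h_j(2^{-j}D)\De_jf$ with $h_j(\eta)=\widetilde\varphi(\eta)\big(2^{-2j}+|\eta|^2\big)^{-1}$, one checks that the family $\{h_j\}_{j\ge0}$ is bounded in every $C^N$ with a fixed compact support away from the origin (the extra $2^{-2j}\ge0$ in the denominator only helps), hence uniformly Schwartz after inverse Fourier transform; likewise $\pa_{x_k}\De_jf=2^{j}\,g_k(2^{-j}D)\De_jf$ with $g_k(\eta)=i\eta_k\widetilde\varphi(\eta)$ uniformly Schwartz.

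For the Besov estimates this already suffices. Since the convolution kernel of $h_j(2^{-j}D)$ is $2^{jd}\check h_j(2^j\cdot)$ and $\sup_j\|\check h_j\|_{L^1}<\infty$, Young's inequality yields $\|(1-\De)^{-1}\De_jf\|_{L^p}\les 2^{-2j}\|\De_jf\|_{L^p}$ and $\|\na\De_jf\|_{L^p}\les 2^{j}\|\De_jf\|_{L^p}$, uniformly in $j\ge0$; the low-frequency block $j=-1$ is bounded directly because both symbols are smooth and bounded on $\mathrm{supp}\,\chi$. Multiplying by $2^{js}$ and taking the $\ell^q$ norm gives the two Besov inequalities. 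For the Triebel--Lizorkin estimates the block-wise $L^p$ bound is not enough, because the $\ell^r$ sum sits inside the $L^p$ norm; instead I upgrade it to a pointwise bound. Using that an $L^1$-normalized, uniformly Schwartz family of kernels is dominated by the Hardy--Littlewood maximal operator $\mathcal{M}$, I obtain the pointwise estimates $|(1-\De)^{-1}\De_jf|\les 2^{-2j}\mathcal{M}(\De_jf)$ and $|\na\De_jf|\les 2^{j}\mathcal{M}(\De_jf)$. Inserting these into the square-function expression and invoking the Fefferman--Stein vector-valued maximal inequality (valid precisely for $1<p,r<\infty$), one gets $\|(1-\De)^{-1}f\|_{\F^s_{p,r}}\les\|f\|_{\F^{s-2}_{p,r}}$ and $\|\na f\|_{\F^s_{p,r}}\les\|f\|_{\F^{s+1}_{p,r}}$. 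The $L^p$ parts of the nonhomogeneous norms are controlled by the same block bounds together with the elementary embeddings of Remark \ref{re-bt} (for $\na$ one passes through $B^1_{p,1}$ since $s+1>1$), and the $j=-1$ contribution is added as above.

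The main obstacle is exactly the Triebel--Lizorkin case: unlike the Besov norm, the $F^s_{p,r}$ norm couples the dyadic blocks through the inner $\ell^r$ sum, so one cannot argue block by block. The device that resolves this is the pointwise maximal-function domination followed by Fefferman--Stein, and it is precisely what forces the hypotheses $1<p,r<\infty$. The remaining points are routine bookkeeping: verifying the uniform $C^N$ (hence uniform $L^1$-kernel) bounds for $\{h_j\}$ and $\{g_k\}$, treating the single low-frequency block $j=-1$, and matching the homogeneous square-function estimate with the nonhomogeneous definition that also carries the $\|\cdot\|_{L^p}$ term.
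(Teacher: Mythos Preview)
Your argument is correct and is precisely the standard route one finds in the cited references: dyadic localization, uniform symbol bounds after rescaling, block-by-block $L^p$ estimates for the Besov case, and pointwise maximal-function domination plus Fefferman--Stein for the Triebel--Lizorkin case. Note, however, that the paper does not actually prove Lemma~\ref{le-op}; it is stated as a background result imported from \cite{bcd,Triebel}, so there is no in-paper proof to compare against---your sketch simply fills in what the references supply.
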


\begin{lemma}[\cite{bcd,chae}]\label{le-pro-es1}
Let $s>0$ and $(p,r,q) \in(1, \infty) \times(1, \infty) \times (1, \infty]$. There exists a positive constant $C$ such that
\bbal
&\|f g\|_{{B}_{p, q}^s(\mathbb{R}^d)} \leq C\left(\|f\|_{L^{\infty}(\mathbb{R}^d)}\|g\|_{{F}_{p, q}^s(\mathbb{R}^d)}+\|g\|_{L^{\infty}(\mathbb{R}^d)}\|f\|_{{F}_{p, q}^s(\mathbb{R}^d)}\right).
\\&\|f g\|_{{F}_{p, r}^s(\mathbb{R}^d)} \leq C\left(\|f\|_{L^{\infty}(\mathbb{R}^d)}\|g\|_{{F}_{p, r}^s(\mathbb{R}^d)}+\|g\|_{L^{\infty}(\mathbb{R}^d)}\|f\|_{{F}_{p, r}^s(\mathbb{R}^d)}\right).
\end{align*}
\end{lemma}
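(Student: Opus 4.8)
The plan is to establish both inequalities through Bony's paraproduct decomposition, treating the Triebel--Lizorkin bound (the second inequality) as the principal case. Writing $S_{k-1}f=\sum_{\ell\le k-2}\Delta_\ell f$, decompose
$$fg=T_fg+T_gf+R(f,g),\qquad T_fg=\sum_{k}S_{k-1}f\,\Delta_kg,\qquad R(f,g)=\sum_{|k-k'|\le1}\Delta_kf\,\Delta_{k'}g,$$
and estimate the three pieces separately in $F^s_{p,r}(\R^d)$.

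For the paraproduct $T_fg$, each summand $S_{k-1}f\,\Delta_kg$ has Fourier support in an annulus of size $\sim 2^k$, so $\Delta_j(T_fg)$ collects only the terms with $|k-j|\le N_0$ for a fixed $N_0$. Because $\Delta_j$ acts by convolution against an $L^1$-normalized kernel, one has the pointwise domination $|\Delta_j(S_{k-1}f\,\Delta_kg)|\les\|S_{k-1}f\|_{L^\infty}\,\mathcal{M}(\Delta_kg)\les\|f\|_{L^\infty}\,\mathcal{M}(\Delta_kg)$, where $\mathcal{M}$ denotes the Hardy--Littlewood maximal operator. Summing over the finitely many $k\approx j$, taking the $\ell^r$-norm with weight $2^{js}$, and then applying the Fefferman--Stein vector-valued maximal inequality on $L^p(\ell^r)$ --- valid precisely in the range $1<p<\infty$, $1<r<\infty$ imposed in the hypothesis --- yields $\|T_fg\|_{F^s_{p,r}}\les\|f\|_{L^\infty}\|g\|_{F^s_{p,r}}$, and symmetrically $\|T_gf\|_{F^s_{p,r}}\les\|g\|_{L^\infty}\|f\|_{F^s_{p,r}}$.

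The remainder $R(f,g)$ is the delicate term and is where I expect the main difficulty to lie. Unlike the paraproducts, each summand $\Delta_kf\,\Delta_{k'}g$ with $|k-k'|\le1$ is spectrally supported in a \emph{ball} of radius $\sim 2^k$ rather than an annulus, so $\Delta_jR(f,g)$ now receives contributions from all $k\ge j-N_1$. Dominating $\mathcal{M}(\Delta_kf\,\Delta_{k'}g)\le\|\Delta_{k'}g\|_{L^\infty}\,\mathcal{M}(\Delta_kf)\les\|g\|_{L^\infty}\,\mathcal{M}(\Delta_kf)$ and splitting $2^{js}=2^{(j-k)s}2^{ks}$, the estimate reduces to a discrete convolution whose kernel $2^{(j-k)s}\mathbf{1}_{\{j-k\le N_1\}}$ is summable \emph{exactly because} $s>0$; this is the only step that genuinely uses the positivity of $s$. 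Young's inequality in $\ell^r$ followed again by Fefferman--Stein gives $\|R(f,g)\|_{F^s_{p,r}}\les\|g\|_{L^\infty}\|f\|_{F^s_{p,r}}$. Combining the three bounds proves the second inequality.

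For the first inequality the identical decomposition applies, but since the Besov norm has the structure $\ell^q(L^p)$ rather than $L^p(\ell^r)$, the pointwise maximal-function domination and the Fefferman--Stein step are replaced by direct $L^p$ bounds on each $\Delta_j$-block together with Young's convolution inequality on sequences, so no vector-valued maximal inequality is needed and $q=\infty$ is admissible; once more the remainder converges thanks to $s>0$. This produces the Besov product estimate with Besov norms on the right, and the stated Triebel--Lizorkin norms on the right-hand side are then accommodated using the embeddings recorded in Remark \ref{re-bt}. The careful spectral bookkeeping (ball versus annulus) and the correct placement of the $L^\infty$ factor in the remainder term remain the crux of the argument.
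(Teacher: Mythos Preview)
The paper does not supply its own proof of this lemma; it is simply quoted from the references \cite{bcd,chae}. Your Bony paraproduct decomposition together with the Fefferman--Stein vector-valued maximal inequality for the $F^s_{p,r}$ case and scalar $L^p$ bounds plus Young's inequality for the $B^s_{p,q}$ case is exactly the standard argument found in those sources, so the approach is correct and coincides with what the cited literature does.

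One small caveat on your last step: to pass from the usual Besov product estimate (with $B^s_{p,q}$ on the right) to the form printed in the paper (with $F^s_{p,q}$ on the right) you invoke Remark~\ref{re-bt}, but that remark only records $F^s_{p,r}\hookrightarrow B^s_{p,\infty}$, and the embedding $F^s_{p,q}\hookrightarrow B^s_{p,q}$ in fact requires $q\ge p$. This is almost certainly a typographical slip in the stated lemma --- the references \cite{bcd} and \cite{chae} give the Besov inequality with $B^s_{p,q}$ on both sides --- so your argument proves the correct (and intended) statement.
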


\begin{lemma}[\cite{bcd}]\label{le-pro-es2}
Let $(p,q)\in (1, \infty) \times (1, \infty]$ and $s>\max\big\{1+\frac dp,\frac32\big\}$. Then we have
\bbal
&\|fg\|_{B^{s-2}_{p,q}(\mathbb{R}^d)}\leq C\|g\|_{B^{s-2}_{p,q}(\mathbb{R}^d)}\|g\|_{B^{s-1}_{p,q}(\mathbb{R}^d)}.
\end{align*}
\end{lemma}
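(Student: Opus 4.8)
Reading the right-hand side as the evident product law $\|fg\|_{B^{s-2}_{p,q}}\lesssim\|f\|_{B^{s-2}_{p,q}}\|g\|_{B^{s-1}_{p,q}}$ (the printed $\|g\|\,\|g\|$ being a typo, with one factor the multiplier of regularity $s-1>d/p$), the plan is to run Bony's decomposition $fg=T_fg+T_gf+R(f,g)$ and bound each piece in $B^{s-2}_{p,q}$. Set $\sigma:=s-2$, so that $f\in B^{\sigma}_{p,q}$ and $g\in B^{\sigma+1}_{p,q}$ with $\sigma+1=s-1>d/p$. The two standing hypotheses then enter transparently: $s>\tfrac32\Leftrightarrow2\sigma+1>0$ is what closes the remainder, while $s>1+\tfrac dp\Leftrightarrow\sigma+1>d/p$ both places $g$ in $L^\infty$ and absorbs a single integrability loss at the very end.

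The two paraproducts are routine. Since $\sigma+1>d/p$ gives $g\in L^\infty$ with $\|g\|_{L^\infty}\lesssim\|g\|_{B^{\sigma+1}_{p,q}}$ by Lemma \ref{em-infi}, the continuity of the paraproduct on $L^\infty\times B^{\sigma}_{p,q}$ yields $\|T_gf\|_{B^{\sigma}_{p,q}}\lesssim\|g\|_{L^\infty}\|f\|_{B^{\sigma}_{p,q}}\lesssim\|g\|_{B^{\sigma+1}_{p,q}}\|f\|_{B^{\sigma}_{p,q}}$. For $T_fg$ the low-frequency factor $f$ may have negative regularity, so I would first embed $f\in B^{\sigma}_{p,q}\hookrightarrow B^{\sigma-d/p}_{\infty,\infty}\hookrightarrow B^{-1}_{\infty,\infty}$, the last inclusion being exactly $\sigma-d/p\ge-1$, i.e. $s\ge1+\tfrac dp$. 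The paraproduct estimate with negative upper index then gives $\|T_fg\|_{B^{(\sigma+1)-1}_{p,q}}=\|T_fg\|_{B^{\sigma}_{p,q}}\lesssim\|f\|_{B^{-1}_{\infty,\infty}}\|g\|_{B^{\sigma+1}_{p,q}}\lesssim\|f\|_{B^{\sigma}_{p,q}}\|g\|_{B^{\sigma+1}_{p,q}}$.

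The remainder is the real obstacle, because both factors live in $L^p$-based spaces with the same $p$: a naive $L^\infty\times L^p$ split inside the dyadic sum squanders $d/p$ derivatives and would force the strictly stronger condition $s>\tfrac32+\tfrac{d}{2p}$. The remedy is to defer the loss of integrability. Bounding each diagonal block by H\"older, $\|\Delta_jf\,\Delta_{j'}g\|_{L^{p/2}}\le\|\Delta_jf\|_{L^p}\|\Delta_{j'}g\|_{L^p}$ with $|j-j'|\le1$, keeps both factors at their natural $L^p$ level, and the classical remainder bound then gives $\|R(f,g)\|_{B^{2\sigma+1}_{p/2,q}}\lesssim\|f\|_{B^{\sigma}_{p,q}}\|g\|_{B^{\sigma+1}_{p,q}}$, legitimate precisely because $2\sigma+1>0$, i.e. $s>\tfrac32$. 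Only afterwards do I pay the integrability, through the Sobolev-type embedding $B^{2\sigma+1}_{p/2,q}\hookrightarrow B^{(2\sigma+1)-d/p}_{p,q}\hookrightarrow B^{\sigma}_{p,q}$, whose final step is exactly $2\sigma+1-d/p\ge\sigma$, i.e. $s\ge1+\tfrac dp$. Summing the three contributions gives the assertion. The one point demanding care is the bookkeeping of the $\ell^q$ summation index through the $p/2\to p$ embedding, so that $\ell^q$-summability survives the H\"older step; once the $d/p$ loss has been isolated to this final embedding, each hypothesis $s>\tfrac32$ and $s>1+\tfrac dp$ is used exactly once and the bound closes under $s>\max\{\tfrac32,1+\tfrac dp\}$.
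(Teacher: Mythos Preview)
The paper does not prove this lemma; it is simply quoted from Bahouri--Chemin--Danchin \cite{bcd} with no argument given. Your Bony-decomposition proof is the standard route to such a product law and is correct: the two paraproducts are handled exactly as you say, and the remainder closes under $2\sigma+1>0$ (i.e.\ $s>\tfrac32$) followed by the $d/p$ loss from integrability (i.e.\ $s>1+\tfrac dp$). Your reading of the typo on the right-hand side is also right and matches how the estimate is actually invoked later in the paper.

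One minor technical remark: when $1<p<2$ your intermediate space $B^{2\sigma+1}_{p/2,q}$ has first index $p/2<1$, so you are briefly in the quasi-Banach range. This causes no real trouble---the remainder bound and the Sobolev-type embedding both extend there---but if you prefer to stay in the Banach range you can invoke the remainder estimate directly in the form
\[
\|R(f,g)\|_{B^{\,2\sigma+1-d/p}_{p,\,q/2}}\lesssim\|f\|_{B^{\sigma}_{p,q}}\|g\|_{B^{\sigma+1}_{p,q}},\qquad 2\sigma+1>0,
\]
(this is the version with $1/p\le 1/p_1+1/p_2$ and the $d(1/p_1+1/p_2-1/p)$ loss built in), then embed $B^{2\sigma+1-d/p}_{p,q/2}\hookrightarrow B^{\sigma}_{p,q}$; this avoids the detour through $p/2$ and makes the $\ell^q$ bookkeeping you flagged at the end completely transparent.
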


\begin{lemma}[\cite{bcd}]\label{le-tr-b}
Let $s>0$ and  $(p,q)\in (1, \infty) \times (1, \infty]$. Assume that $f_0\in B^s_{p,q}$, $g\in L^1(0,T; B^s_{p,q})$. If $f\in L^\infty(0,T; B^s_{p,q})\bigcap \mathcal{C}([0,T]; \mathcal{S}^{'})$ solves the following linear transport equation:
\begin{equation}\label{d-tr}
\quad \partial_t f+v\cdot \nabla f=g,\quad  f|_{t=0} =f_0,
\end{equation}
then there exists a constant $C=C(d,p,r,s)$ such that the following statement holds:
\begin{align}\label{ES2}
\sup_{\tau\in [0,t]}\|f(\tau)\|_{B^{s}_{p,q}}\leq Ce^{CV(t)}\Big(\|f_0\|_{B^s_{p,q}}
+\int^t_0\|g(\tau)\|_{B^{s}_{p,q}}\dd \tau\Big),
\end{align}
with
\begin{align*}
V(t)=
\begin{cases}
\int_0^t \|\nabla v(\tau)\|_{B^{\frac{d}{p}}_{p,\infty}\cap L^\infty}\dd \tau,&\quad\mathrm{if} \; s<1+\frac{d}{p},\\
\int_0^t \|\nabla v(\tau)\|_{B^{s}_{p,q}}\dd \tau,&\quad\mathrm{if} \; s=1+\frac{d}{p},\\
\int_0^t \|\nabla v(\tau)\|_{B^{s-1}_{p,q}}\dd \tau, &\quad \mathrm{if} \;s>1+\frac{d}{p}.
\end{cases}
\end{align*}
If $f=v$, the estimate \eqref{ES2} holds with
\[V(t)=\int_0^t \|\nabla v(s)\|_{L^\infty}\dd s.\]
\end{lemma}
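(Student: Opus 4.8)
The plan is to localize the equation in frequency via the Littlewood--Paley decomposition, derive an $L^p$ estimate for each dyadic block, control the resulting commutator by a paraproduct analysis, and close with Gr\"onwall's inequality. First I would apply the dyadic operator $\De_q$ to \eqref{d-tr}. Since $\De_q$ does not commute with the transport field $v\cd\na$, this yields
\[
\pa_t\De_q f+v\cd\na\De_q f=\De_q g+R_q,\qquad R_q:=v\cd\na\De_q f-\De_q(v\cd\na f).
\]
Viewing this as a transport equation for $\De_q f$ with source $\De_q g+R_q$, I would run the standard $L^p$ energy estimate: multiplying by $|\De_q f|^{p-2}\De_q f$, integrating over $\R^d$, and using the identity $\int_{\R^d}(v\cd\na w)|w|^{p-2}w\,\dd x=-\fr1p\int_{\R^d}(\div v)|w|^p\,\dd x$, one obtains
\[
\fr{\dd}{\dd t}\|\De_q f\|_{L^p}\le \fr1p\|\div v\|_{L^\infty}\|\De_q f\|_{L^p}+\|\De_q g\|_{L^p}+\|R_q\|_{L^p}.
\]
Integrating in time and multiplying by $2^{qs}$ reduces the whole problem to controlling the weighted $\ell^q$ norm of the commutator sequence $\big(2^{qs}\|R_q\|_{L^p}\big)_{q\ge-1}$.

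The technical heart is the commutator estimate. Applying Bony's decomposition $v^j\pa_j f=T_{v^j}\pa_j f+T_{\pa_j f}v^j+R(v^j,\pa_j f)$ componentwise, together with the support properties of the blocks, I would split $R_q$ into the usual pieces and bound each through the standard toolkit: $L^p$-boundedness of $\De_q$, Bernstein's inequalities, and the first-order (mean-value) commutator bound in which the low-frequency part of $v$ contributes a factor $2^{-q}\|\na v\|_{L^\infty}$ against a neighboring block of $f$. The outcome is an estimate of the form
\[
\Big\|\big(2^{qs}\|R_q\|_{L^p}\big)_{q\ge-1}\Big\|_{\ell^q}\le C\,W(t)\,\|f\|_{B^s_{p,q}},
\]
where the admissible norm $W(t)$ of $\na v$ depends on the position of $s$ relative to $1+\fr dp$: for $s>1+\fr dp$ the high-frequency interactions are summable and $\|\na v\|_{B^{s-1}_{p,q}}$ suffices; at $s=1+\fr dp$ the borderline case forces the slightly larger $\|\na v\|_{B^{s}_{p,q}}$; and for $s<1+\fr dp$ one uses $\|\na v\|_{B^{d/p}_{p,\infty}\cap L^\infty}$. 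This matches the three branches of $V(t)$ in the statement, the $\fr1p\|\div v\|_{L^\infty}$ term from the energy step being dominated by $W(t)$ in each regime through the embeddings recalled in Remark~\ref{re-bt} and Lemma~\ref{em-infi}.

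With the commutator estimate in hand, I would assemble the pieces into the integral inequality
\[
\|f(t)\|_{B^s_{p,q}}\le \|f_0\|_{B^s_{p,q}}+\int_0^t\|g(\tau)\|_{B^s_{p,q}}\,\dd\tau+C\int_0^t V'(\tau)\,\|f(\tau)\|_{B^s_{p,q}}\,\dd\tau,
\]
and Gr\"onwall's lemma then produces \eqref{ES2} with the factor $e^{CV(t)}$. For the special case $f=v$, the commutator $R_q$ carries $v$ in both slots, and a more careful bookkeeping of the paraproduct terms shows that the top-order contribution is absorbed using only $\|\na v\|_{L^\infty}$ rather than a full Besov norm; this removes the loss at the borderline and gives the sharper statement $V(t)=\int_0^t\|\na v(s)\|_{L^\infty}\,\dd s$ uniformly in $s>0$. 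The main obstacle throughout is precisely the commutator estimate in the borderline and subcritical regimes $s\le 1+\fr dp$, where naive summation over frequencies diverges and one must exploit the finite-band structure of the paraproduct remainder together with $\ell^q$-duality to recover summability.
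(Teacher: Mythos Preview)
The paper does not actually prove this lemma: it is quoted verbatim from \cite{bcd} and used as a black box in the preliminaries, so there is no ``paper's own proof'' to compare against. Your outline is precisely the standard argument found in \cite{bcd} (Chapter~3): apply $\De_q$ to the transport equation, run the $L^p$ energy estimate along the flow, control the commutator $[v\cd\na,\De_q]f$ by Bony's paraproduct decomposition together with the first-order Taylor/mean-value commutator bound, and close via Gr\"onwall. The trichotomy in $V(t)$ and the improvement when $f=v$ are handled exactly as you describe, so the proposal is correct and matches the cited source.
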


\begin{lemma}[\cite{lyz-novi}]\label{le-tr-t}
Let $s>0$ and  $(p,r)\in (1, \infty) \times (1, \infty)$.  Assume that $f_0\in F^s_{p,r}(\mathbb{R}^d)$, $g\in L^1([0,T]; F^s_{p,r}(\mathbb{R}^d))$. If $f\in L^\infty([0,T]; F^s_{p,r}(\mathbb{R}^d))\bigcap \mathcal{C}([0,T]; \mathcal{S}'(\mathbb{R}^d))$ solves the linear transport equation \eqref{d-tr}, then there exists a constant $C=C(d,p,r,s)$, such that
\begin{align*}
&\|f(t)\|_{F^s_{p,r}(\mathbb{R}^d)}\leq e^{CV(t)}\|f_0\|_{F^s_{p,r}(\mathbb{R}^d)}+\int_0^t\|g(\tau)\|_{F^s_{p,r}(\mathbb{R}^d)}\mathrm{d}\tau \nonumber\\& \quad \quad
+C\int^t_0\left(\|f(\tau)\|_{F^s_{p,r}(\mathbb{R}^d)}\|\nabla v(\tau)\|_{L^\infty(\mathbb{R}^d)}+\|\nabla v(\tau)\|_{F^{s-1}_{p,r}(\mathbb{R}^d)}\|\nabla f(\tau)| |_{L^\infty(\mathbb{R}^d)}\right)\mathrm{d}\tau,
\end{align*}
or
\begin{align*}
&\|f(t)\|_{F^s_{p,r}(\mathbb{R}^d)}\leq e^{CV(t)}\|f_0\|_{F^s_{p,r}(\mathbb{R}^d)}+\int_0^t\|g(\tau)\|_{F^s_{p,r}(\mathbb{R}^d)}\mathrm{d}\tau \nonumber\\& \quad \quad
+C\int^t_0\left(\|f(\tau)\|_{F^s_{p,r}(\mathbb{R}^d)}\|\nabla v(\tau)\|_{L^\infty(\mathbb{R}^d)}+\|\nabla v(\tau)\|_{F^{s}_{p,r}(\mathbb{R}^d)}\|f(\tau)| |_{L^\infty(\mathbb{R}^d)}\right)\mathrm{d}\tau,
\end{align*}
here $V(t)=\int^t_0\|\pa_xv\|_{L^\infty}\dd \tau$. Moreover, if $s>1+\frac{d}{p}$, we have
\bbal
\|f(t)\|_{{F}^s_{p,r}}\leq \left(\|f_0\|_{{F}^s_{p,r}}+\int^t_0e^{-C\widetilde{V}(\tau)}\|g(\tau)\|_{F^s_{p,r}}\dd \tau\right)e^{C\widetilde{V}(t)},
\end{align*}
here $\widetilde{V}(t)=\int^t_0\|v\|_{F^s_{p,r}}\dd \tau$.
\end{lemma}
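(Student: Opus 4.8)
The plan is to reduce the statement, after a Littlewood--Paley localization, to a transport energy mechanism combined with a commutator estimate adapted to the $\ell^r(L^p)$ ordering of the Triebel--Lizorkin norm. First I would apply $\De_j$ to \eqref{d-tr} and write
\[
\pa_t\De_j f + v\cd\na\De_j f = \De_j g + R_j,\qquad R_j:=[v\cd\na,\De_j]f,
\]
so that each $\De_j f$ solves a transport equation with forcing $\De_j g + R_j$. The whole statement then follows once (a) this family of localized equations is integrated in a way that produces the factor $e^{CV(t)}$ and the time-integrated source, and (b) the commutator family $(R_j)_j$ is controlled in the $\F^s_{p,r}$ seminorm.

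For (a) I would integrate along the flow $X(t,\cd)$ generated by $v$, i.e. $\frac{\dd}{\dd t}X(t,x)=v(t,X(t,x))$ with $X(0,x)=x$; this flow is bi-Lipschitz because $\na v\in L^1_tL^\infty_x$. Along characteristics,
\[
\De_j f(t,X(t,x)) = \De_j f_0(x) + \int_0^t(\De_j g + R_j)(\tau,X(\tau,x))\,\dd\tau.
\]
Multiplying by $2^{js}$, taking the $\ell^r$ norm in $j$ (using Minkowski's inequality to pull it through the time integral) and then the $L^p$ norm in $x$, the change of variables $y=X(t,x)$ introduces the Jacobian $\det\na X$, bounded above and below by $e^{\pm\int_0^t\|\div v\|_{L^\infty}\dd\tau}$. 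Since $\|\div v\|_{L^\infty}\les\|\na v\|_{L^\infty}$, this yields the factor $e^{CV(t)}$ on the data term and controls the source, reducing everything to estimating $\big\|(\sum_j 2^{jsr}|R_j|^r)^{1/r}\big\|_{L^p}$. (An equivalent route is to derive a scalar transport equation for $\sum_j 2^{jsr}|\De_j f|^r$ and run an $L^{p/r}$ energy estimate, but the Lagrangian argument is cleaner and uniform in $1<p,r<\infty$, avoiding the difficulty that $p/r<1$ when $r>p$.)

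The heart of the proof is the commutator estimate
\[
\Big\|\big(\textstyle\sum_j 2^{jsr}|R_j|^r\big)^{1/r}\Big\|_{L^p}\les\|\na v\|_{L^\infty}\|f\|_{F^s_{p,r}}+\|\na f\|_{L^\infty}\|\na v\|_{F^{s-1}_{p,r}},
\]
together with the variant obtained by transferring the derivative from $f$ onto $v$, which produces the second stated inequality with $\|f\|_{L^\infty}\|\na v\|_{F^s_{p,r}}$. I would prove it by Bony's decomposition of $v^k\pa_k f$, splitting $R_j$ into $[T_{v^k},\De_j]\pa_k f$, a paraproduct-difference part $T_{\pa_k\De_j f}v^k-\De_j T_{\pa_k f}v^k$, and a remainder part $R(v^k,\pa_k\De_j f)-\De_j R(v^k,\pa_k f)$. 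For the first piece I would use the kernel representation $\De_j u=2^{jd}h(2^j\cd)\ast u$ together with $|S_{m-1}v(x)-S_{m-1}v(y)|\le|x-y|\,\|\na v\|_{L^\infty}$ to gain a factor $2^{-j}$; what remains is a vector-valued maximal operator acting on $(2^{js}\De_m f)$, so the Fefferman--Stein maximal inequality (valid precisely for $1<p,r<\infty$) delivers $\|\na v\|_{L^\infty}\|f\|_{F^s_{p,r}}$. For the paraproduct-difference and remainder pieces I would factor out $\|\na f\|_{L^\infty}$ (resp. $\|f\|_{L^\infty}$) pointwise, rewrite $2^{ms}|\De_m v|\sim 2^{m(s-1)}|\De_m\na v|$, and sum over the relevant frequency range by discrete convolution together with Fefferman--Stein, landing in $\|\na v\|_{F^{s-1}_{p,r}}$ (resp. $\|\na v\|_{F^{s}_{p,r}}$); the high--high remainder converges because $s>0$.

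Finally I would assemble the pieces. The low-frequency $L^p$ part of the norm is handled by the same Lagrangian estimate, so the full norm satisfies $\frac{\dd}{\dd t}\|f\|_{F^s_{p,r}}\les\|g\|_{F^s_{p,r}}+\|\na v\|_{L^\infty}\|f\|_{F^s_{p,r}}+\|\na f\|_{L^\infty}\|\na v\|_{F^{s-1}_{p,r}}$; integrating in time gives the first estimate, and the variant commutator estimate gives the second. When $s>1+\frac dp$ the embedding $F^{s-1}_{p,r}\hookrightarrow L^\infty$ bounds both $\|\na v\|_{L^\infty}$ and $\|\na f\|_{L^\infty}$ by $\|v\|_{F^s_{p,r}}$ and $\|f\|_{F^s_{p,r}}$, so the right-hand side collapses to $\|g\|_{F^s_{p,r}}+C\|v\|_{F^s_{p,r}}\|f\|_{F^s_{p,r}}$; Gronwall's lemma with integrating factor $e^{-C\widetilde V(t)}$, $\widetilde V(t)=\int_0^t\|v\|_{F^s_{p,r}}\dd\tau$, then produces the last inequality. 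I expect the main obstacle to be step (b): in the Triebel--Lizorkin setting the $L^p$ and $\ell^r$ norms cannot be separated frequency-by-frequency as in the Besov case, so each block estimate must be arranged as a vector-valued maximal bound to which Fefferman--Stein applies.
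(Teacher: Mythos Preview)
The paper does not prove this lemma; it is quoted verbatim from \cite{lyz-novi} and used as a black box, so there is no ``paper's own proof'' to compare against. Your outline is nevertheless the standard route for transport estimates in $F^s_{p,r}$ and is essentially what one expects the cited reference to contain: localize with $\De_j$, pass to Lagrangian coordinates along the flow of $v$ (the Jacobian bound $e^{\pm CV(t)}$ is the source of the exponential factor), and control the commutator family $(2^{js}[v\cd\na,\De_j]f)_j$ in $L^p(\ell^r)$ by a Bony decomposition together with the Fefferman--Stein vector-valued maximal inequality; the final case $s>1+\tfrac dp$ follows by embedding plus Gronwall, exactly as you say.

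Two small remarks. First, the Lagrangian argument as you wrote it naturally produces an inequality with $e^{CV(t)}$ multiplying \emph{all} terms on the right, not just $\|f_0\|_{F^s_{p,r}}$; the stated form without the exponential on the integrals is equivalent a posteriori because of the explicit term $C\int_0^t\|\na v\|_{L^\infty}\|f\|_{F^s_{p,r}}\,\dd\tau$, so this is cosmetic. Second, in your step ``take the $\ell^r$ norm in $j$, then $L^p$ in $x$'' after composing with $X(\tau,x)$, be explicit that the change of variables is performed separately for each $\tau$ inside the time integral (after the two Minkowski steps), so that the Jacobian factor is $e^{CV(\tau)}\le e^{CV(t)}$; otherwise the argument reads as though a single change of variables handles all times at once. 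With these clarifications the proposal is complete.
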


\section{Local well-posedness and no-uniform dependence}

This section is devoted to establishing the local well-posedness and no-uniform dependence of solutions for the Cauchy problem \eqref{E-P1} in Triebel-Lizorkin spaces. We divide the proof of Theorem \ref{the1} into six steps.

\textbf{Step 1. Construction of the approximate system}

Starting from $u^{0}\triangleq0$, we define by induction a sequence $\{u^{n}\}_{n\in\mathbb{N}}$ of smooth functions   by solving the following linear system:
    \begin{align}\label{lin}
       \begin{cases}
        \partial_tu^{n+1}+u^n\cd \na u^{n+1}=Q(u^n,u^n)+R(u^n,u^n),\\
        u^{n+1}(0,x)=S_{n+1}u_0(x).
        \end{cases}
    \end{align}
Since $u_0\in F^{s}_{p,r}$, then all initial data $S_{n+1}u_0\in F^\infty_{p,r}$. By induction, for every $n\geq 1$ and $T>0$, we can deduce from Lemma \ref{le-tr-t} that the system \eqref{lin} has a unique solution $u^{n}$ in $\mathcal{C}^1([0,T];F^\infty_{p,r})$.

\textbf{Step 2. Uniform estimates to the approximate solutions}

By Lemma \eqref{em-infi}-\ref{le-pro-es1} and \eqref{n-Puv}, we get
\begin{equation}
||Q(u^n,u^n)||_{F^{s}_{p,r}}+||R(u^n,u^n)||_{F^{s}_{p,r}}\leq C||u^n||^2_{F^{s}_{p,r}}.
\end{equation}
Applying Lemma \ref{le-tr-t} to the system \eqref{lin}, we have the following inequality
   \begin{align}\label{l}
   \|u^{n+1}(t)\|_{F^{s}_{p,r}}&\leq e^{CU^{n}(t)}\left(\|S_{n+1}u_0\|_{F^{s}_{p,r}}+C\int^t_0 e^{-CU^{n}(\tau)}
   \|Q(u^{n},u^n),R(u^{n},u^n)\|_{F^{s}_{p,r}} \mathrm{d}\tau\right)\nonumber\\
   &\leq Ce^{CU^{n}(t)} \|u_0\|_{F^{s}_{p,r}}+C\int^t_0 e^{CU^{n}(t)-CU^{n}(\tau)}
   \|u^{n}(\tau)\|^2_{F^{s}_{p,r}}\mathrm{d}\tau,
   \end{align}
   where  $U^{n}(t)\triangleq \int_0^t \|u^{n}(\tau)\|^2_{F^{s}_{p,r}}\mathrm{d}\tau$ and $C\geq1$. We now fix a $T>0$ such that $2C^2\|u_0\|_{F^{s}_{p,r}}T<1$. By induction, we gain
\begin{align*}\label{3.2}
\|u^n(t)\|_{F^{s}_{p,r}}
\leq \frac{C\|u_0\|_{F^{s}_{p,r}}}{{1-2C^2\|u_0\|^2_{F^{s}_{p,r}}t}},~~~\forall t\in[0,T],~~~\forall n\in \mathbb{N}.
\end{align*}
In fact, this is obvious for $n=0$ since $u^0 = 0$. Suppose that it is valid for $n\geq1$, we shall prove it holds for $n+1$.
Direct computations gives that
\begin{align*}
CU^{n}(t)-CU^{n}(t')=C\int^{t}_{t'}\|u^{n}(\tau)\|_{F^{s}_{p,r}}\mathrm{d}\tau
\leq \int^{t}_{t'}\frac{C^2\|u_0\|^2_{F^{s}_{p,r}}}{1-2C^2\|u_0\|^2_{F^{s}_{p,r}}\tau}\mathrm{d}\tau
=\fr12\ln\left(\frac{1-2C^2\|u_0\|^2_{F^{s}_{p,r}}t'}{1-2C^2\|u_0\|^2_{F^{s}_{p,r}}t}\right),
\end{align*}
then from \eqref{l}, we deduce that
\begin{align*}
\|u^{n+1}(t)\|_{F^{s}_{p,r}}\nonumber &\leq C\left(1-2C^2\|u_0\|^2_{F^{s}_{p,r}}t\right)^{-\fr12}\|u_0\|_{_{F^s_{p,r}}}\left(1
+\int^t_0\left(1-2C^2\|u_0\|^2_{F^{s}_{p,r}}t'\right)^{-\fr32}C^2\|u_0\|^{2}_{F^s_{p,r}}\dd t'\right)
\\\nonumber&= \frac{C\|u_0\|_{F^{s}_{p,r}}}{{1-2C^2\|u_0\|^2_{F^{s}_{p,r}}t}}\leq \frac{C\|u_0\|_{F^{s}_{p,r}}}{{1-2C^2\|u_0\|^2_{F^{s}_{p,r}}T}}.
\end{align*}
Therefore, $\{u^{n}\}_{n\in\mathbb{N}}$ is bounded in $L^\infty([0,T];F^{s}_{p,r})$. This entails that $u^{n}\cd \na u^{n+1}$ is bounded  in  $L^\infty([0,T];F^{s-1}_{p,r}).$  As the right-hand side of \eqref{l} is bounded in $L^\infty([0,T];F^{s}_{p,r}),$  we can conclude that $\{u^{n}\}_{n\in\mathbb{N}}$ is bounded in $L^\infty([0,T];F^{s}_{p,r})$.

\textbf{Step 3. Convergence of the approximate solutions}

Note that $F^s_{p,r}\hookrightarrow B^s_{p,\infty}$. Now we claim that $\{u^{n}\}_{n\in\mathbb{N}}$ is a Cauchy sequence in $\mathcal{C}([0,T];B^{s-1}_{p,\infty})$. Indeed, for all $n\in\mathbb{N},$  we have
 \begin{align*}
    \begin{cases}
    \left(\partial_t+(u^{n+m})\cd\na\right)(u^{n+m+1}-u^{n+1})=-(u^{m+n}-u^n)\cd\na u^{n+1} +P(u^{n+m},u^{n+m})-{P}(u^n,u^n),\\
    (u^{n+m+1}-u^{n+1})|_{t=0}=(S_{n+m+1}-S_{n+1})u_0(x).
    \end{cases}
 \end{align*}
By the definition of $S_n$, we get
\begin{equation*}
   (S_{n+m+1}-S_{n+1})u_0(x)=\sum\limits_{q=n+1}^{n+m} \Delta_q u_0(x),
\end{equation*}
which leads to
\begin{align*}
\left\|\sum\limits_{q=n+1}^{n+m} \Delta_q u_0\right\|_{B^{s-1}_{p,\infty}}&=\sup_{j\geq-1}2^{j(s-1)}\left\|\Delta_j\sum\limits_{q=n+1}^{n+m} \Delta_q u_0\right\|_{L^p}\\
&\leq C\sum\limits_{q=n+1}^{n+m} 2^{-q}2^{qs}\left\|\Delta_q u_0\right\|_{L^p}\leq C2^{-n}\|u_0\|_{B^{s}_{p,\infty}}\leq C2^{-n}\|u_0\|_{F^{s}_{p,r}}.
\end{align*}
By Lemmas \ref{em-infi}-\ref{le-pro-es2}, we obtain
\begin{align}
&||P(u^{n+m},u^{n+m})-P(u^n,u^n)||_{B^{s-1}_{p,\infty}}\leq C||u^{n+m}-u^n||_{B^{s-1}_{p,\infty}} (||u^{n+m}||_{B^s_{p,\infty}}+
||u^{n}||_{B^{s}_{p,\infty}}), \label{zhong1}
\\& ||(u^{n+m}-u^n)\cdot \nabla u^{n+1}||_{B^{s-1}_{p,\infty}}\leq C||u^{n+m}-u^n||_{B^{s-1}_{p,\infty}}||u^{n+1}||_{B^s_{p,\infty}}. \label{zhong2}
\end{align}
According to Lemma \ref{le-tr-b} and \eqref{zhong1}-\eqref{zhong2}, we obtain for $t\in[0,T]$
\begin{align*}
  \|(u^{n+m+1}-u^{n+1})(t)\|_{B^{s-1}_{p,\infty}}
  & \leq C
  e^{CU^{n+m}(t)}\Big(\|(S_{n+m+1}-S_{n+1})u_0\|_{B^{s-1}_{p,\infty}}\\
  &\quad+\int^t_0e^{-CU^{n+m}(\tau)}\|(u^{n+m}-u^{n})(\tau)\|_{B^{s-1}_{p,\infty}}
  \|(u^n,u^{n+1},u^{n+m},u^{n+m+1})(\tau)\|_{B^s_{p,\infty}}\dd \tau\Big).
\end{align*}
Define
\begin{equation*}
   b_n^m(t)\triangleq \|(u^{n+m}-u^n)(t)\|_{B^{s-1}_{p,\infty}}.
 \end{equation*}
So we can find a positive $C_T$ independent of $n,m$ such that
\begin{align*}
\begin{cases}
b^m_{n+1}(t)\leq C_{T}\left(2^{-n}+\int_0^t b^m_n(\tau)\mathrm{d}\tau\right),\quad \forall \, t\in[0,T],\\
b^m_1(t)\leq C_T(1+t),\quad \forall \, t\in[0,T].
\end{cases}
\end{align*}
Arguing by induction with respect to the index $n$, we deduce that
\begin{align*}
b^m_{n+1}(t)
&\leq\left(C_{T}\sum\limits_{k=0}^n \frac{(2T C_T)^k}{k!}\right)2^{-n}+ C_T\frac{(T C_T)^{n+1}}{(n+1)!}\rightarrow 0 ,\quad as\ n\rightarrow \infty.
\end{align*}
Now we conclude that $\{u^{n}\}_{n\in\mathbb{N}}$ is a Cauchy sequence in $\mathcal{C}([0,T];B^{s-1}_{p,\infty})$ and converges to a limit function $u\in \mathcal{C}([0,T];B^{s-1}_{p,\infty})$.

\textbf{Step 4. Local existence of a solution}

We will show that $u$ belongs to $\mathcal{C}([0,T];F^{s}_{p,r})$ and satisfies Eqs.\eqref{E-P1}. Since  $\{u^{n}\}_{n\in \mathbb{N}}$ is bounded in $L^\infty(0,T;F^{s}_{p,r})$, Fatou's lemma guarantees that $u$ also belongs to $L^\infty(0,T;F^{s}_{p,r})$. Now, as $\{u^{n}\}_{n\in\mathbb{N}}$
converges to $u \in \ \mathcal{C}([0,T];B^{s-1}_{p,\infty})\hookrightarrow\ \mathcal{C}([0,T];F^{s-1-\eta}_{p,r})$ with $\eta\in(0,s-1)$, an interpolation argument ensures that the convergence actually holds true in $  \mathcal{C}([0,T];F^{s'}_{p,r})$ for any $s' < s.$  It is then easy to pass to the limit in \eqref{lin} and to conclude that
$u$ is a solution to the system \eqref{E-P1}. Furthermore, we will show that the solution $u$ is in $\mathcal{C}([0,T];F^{s}_{p,r})$. In fact, for any $t_1,t_2\in [0,T]$, it follows from the following inequality \eqref{snu0-u0} that
\bbal
\|\mathbf{S}_{t_2}(u_0)-\mathbf{S}_{t_1}(u_0)\|_{F^s_{p,r}}
&\leq \|\mathbf{S}_{t_1}(S_Nu_0)-\mathbf{S}_{t_1}(u_0)\|_{F^s_{p,r}}+\|\mathbf{S}_{t_1}(S_Nu_0)-\mathbf{S}_{t_2}(S_Nu_0)\|_{F^s_{p,r}}\\
&\quad+\|\mathbf{S}_{t_2}(S_Nu_0)-\mathbf{S}_{t_2}(u_0)\|_{F^s_{p,r}}\\
&\leq \sum_{i=1}^2\|\mathbf{S}_{t_i}(S_Nu_0)-\mathbf{S}_{t_i}(u_0)\|_{F^s_{p,r}}+\int_{t_1}^{t_2}\|\pa_{\tau}\mathbf{S}_{\tau}(S_Nu_0)\|_{F^s_{p,r}}\dd\tau\\
&\leq C\|S_Nu_0-u_0\|_{F^s_{p,r}}+C2^N|t_1-t_2|.
\end{align*}
Combining  the density of $S_Nu_0$ in $F^s_{p,r}$ with $r<\infty$  yields that $u\in\mathcal{C}([0,T];F^{s}_{p,r})$.

\textbf{Step 5. Uniqueness of a solution}

In order to prove the uniqueness, we consider two solutions $u$ and $v$ of Eqs.\eqref{E-P1} in
$E^{s}_{p,r}(T)$ with the same initial data. Applying Lemma \ref{lemnew-B-T} below, we assert that $u\equiv v$ in $\mathcal{C}([0,T];F^{s}_{p,r})$. In fact,
$$\|u-v\|_{B^{s-1}_{p,\infty}}\leq C\|u_1(0)-u_2(0)\|_{B^{s-1}_{p,\infty}}=0,$$
which implies the uniqueness.

\begin{lemma}\label{lemnew-B-T}
Let $u\in \mathcal{C}([0,T],F^{s}_{p,r})$ and  $v\in \mathcal{C}([0,T],F^{s+1}_{p,r})$  be two solutions of Eqs.\eqref{E-P1} associated with $u_0$ and $v_0$, respectively. Then we have the estimate for the difference $w=u-v$
\begin{align}
&\|w\|_{B^{s-1}_{p,\infty}}\leq C\|w_0\|_{B^{s-1}_{p,\infty}},\label{y1}
\\&\|w\|_{F^{s}_{p,r}}\leq C\left(\|w_0\|_{F^{s}_{p,r}}+\int^t_0\|\pa_{x}v\|_{F^{s}_{p,r}}\|w\|_{L^\infty}\dd \tau\right),\label{y2}
\end{align}
where $w_0=u_0-v_0$, the constants $C$ depends on $T$ and initial norm $\|u_0,v_0\|_{F^{s}_{p,r}}$.
\end{lemma}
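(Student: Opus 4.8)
The plan is to estimate the difference $w=u-v$ by writing down the equation it solves and then applying the transport estimates from Lemma~\ref{le-tr-b} (in $B^{s-1}_{p,\infty}$) and Lemma~\ref{le-tr-t} (in $F^s_{p,r}$). Subtracting the two copies of \eqref{E-P1}, one for $u$ and one for $v$, one finds that $w$ satisfies a transport equation of the form
\begin{align*}
\partial_t w+u\cdot\nabla w=-w\cdot\nabla v+P(u,u)-P(v,v),\qquad w|_{t=0}=w_0,
\end{align*}
where the transport velocity is chosen to be $u$ (one could equally take $v$). The right-hand side is bilinear in the data up to the operators $Q,R$ appearing in \eqref{n-Puv}, so by the product laws in Lemmas~\ref{le-op}, \ref{le-pro-es1}, \ref{le-pro-es2} together with the smoothing $(1-\Delta)^{-1}$, one decomposes $P(u,u)-P(v,v)=P(w,u)+P(v,w)$ and bounds each term in the relevant norm by $\|w\|$ times a norm of $u$ or $v$ one regularity level higher. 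The key point, exactly as in the convergence step (Step~3), is that estimating the quadratic nonlinearity in $B^{s-1}_{p,\infty}$ costs only the $B^s_{p,\infty}$-norm of $u,v$ — which is controlled by $\|u,v\|_{F^s_{p,r}}$ via the embedding $F^s_{p,r}\hookrightarrow B^s_{p,\infty}$ — so no derivative is lost and Lemma~\ref{le-tr-b} applies with $V(t)=\int_0^t\|\nabla u\|_{L^\infty}\,\dd\tau<\infty$. This yields \eqref{y1} after a Grönwall argument.

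For \eqref{y2} I would apply the $F^s_{p,r}$ transport estimate of Lemma~\ref{le-tr-t} (the version containing the term $\|\nabla v\|_{F^{s-1}_{p,r}}\|\nabla f\|_{L^\infty}$, or the one with $\|\nabla v\|_{F^s_{p,r}}\|f\|_{L^\infty}$) to the same equation for $w$. Here the source term $-w\cdot\nabla v$ is problematic at top order because $\nabla v$ is only in $F^{s-1}_{p,r}$ when $v\in F^s_{p,r}$; this is precisely why the lemma hypothesis upgrades $v$ to $\mathcal{C}([0,T],F^{s+1}_{p,r})$, so that $\nabla v\in F^s_{p,r}$ and the product law gives $\|w\cdot\nabla v\|_{F^s_{p,r}}\lesssim \|\nabla v\|_{L^\infty}\|w\|_{F^s_{p,r}}+\|\nabla v\|_{F^s_{p,r}}\|w\|_{L^\infty}$. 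The first piece is absorbed into the exponential prefactor (it merely enlarges $V(t)$ by a finite amount), while the second piece is exactly the integral term displayed on the right-hand side of \eqref{y2}. The contributions from $P(w,u)+P(v,w)$ are handled the same way and are in fact better behaved thanks to the gain of two derivatives from $(1-\Delta)^{-1}$, so they too are absorbed into the $\|w\|_{F^s_{p,r}}$ coefficient. A final Grönwall inequality closes \eqref{y2}.

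The main obstacle is the asymmetry forced by the loss of one derivative in the term $w\cdot\nabla v$: one cannot close a top-order estimate for $w$ in $F^s_{p,r}$ using only $u,v\in F^s_{p,r}$, which is why the lemma is stated with $v$ one notch smoother and why, in Step~5, uniqueness is obtained by the weaker bound \eqref{y1} in $B^{s-1}_{p,\infty}$ (a logarithmic-type space where no derivative is lost) rather than directly in $F^s_{p,r}$. Care is also needed that all the product estimates of Lemmas~\ref{le-pro-es1}--\ref{le-pro-es2} require $s>\max\{\tfrac32,1+\tfrac dp\}$ so that $F^{s-1}_{p,r}\hookrightarrow L^\infty$ and the algebra-type bounds hold; this is where the hypothesis on $s$ in Theorem~\ref{the1} is used. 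Once both inequalities are in hand, setting $w_0=0$ in \eqref{y1} gives $w\equiv0$ in $\mathcal{C}([0,T],B^{s-1}_{p,\infty})$, hence $u\equiv v$, completing the uniqueness argument of Step~5.
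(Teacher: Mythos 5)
Your proposal follows exactly the same route as the paper: write the transport equation $\partial_t w+u\cdot\nabla w=-w\cdot\nabla v+P(u,u)-P(v,v)$, apply Lemma~\ref{le-tr-t} (resp.\ Lemma~\ref{le-tr-b}) together with the product estimates of Lemmas~\ref{em-infi}--\ref{le-pro-es2}, and close with Gr\"onwall; the paper's own proof is only a two-line sketch of this, so your write-up supplies the same argument in more detail, correctly identifying why the $F^{s+1}_{p,r}$ hypothesis on $v$ and the term $\int_0^t\|\partial_x v\|_{F^s_{p,r}}\|w\|_{L^\infty}\,\dd\tau$ are needed.
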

\begin{proof}\;
Obviously, $w\in \mathcal{C}([0,T],F^{s}_{p,r})$ and $w$ solves the transport equation
\begin{align}\label{m}
\begin{cases}
\pa_tw+u\cd\na w=-w\cd\na v+[{P}(u,u)-{P}(v,v)], \\
w(0,x)=u_0(x)-v_0(x).
\end{cases}
\end{align}
Applying Lemma \ref{le-tr-t} to \eqref{m} and using Lemmas  \ref{em-infi}-\ref{le-pro-es2} yields \eqref{y2}. It is easy to obtain \eqref{y1} and we omit the proof.
\end{proof}

\textbf{Step 5. Continuous Dependence}

Letting $u=\mathbf{S}_{t}(u_0)$ and $v=\mathbf{S}_{t}(S_Nu_0)$, using Lemma \ref{le-tr-t}, we have
\bbal
&\|\mathbf{S}_{t}(S_Nu_0)\|_{F^{s+1}_{p,r}}\leq C\|S_Nu_0\|_{F^{s+1}_{p,r}}\leq C2^N\|u_0\|_{F^{s}_{p,r}}
\end{align*}
and
\bbal
\|\mathbf{S}_{t}(S_Nu_0)-\mathbf{S}_{t}(u_0)\|_{L^\infty}&\leq C\|\mathbf{S}_{t}(S_Nu_0)-\mathbf{S}_{t}(u_0)\|_{B^{s-1}_{p,\infty}}
\\&\leq C\|S_Nu_0-u_0\|_{B^{s-1}_{p,\infty}}\\
&\leq C\|S_Nu_0-u_0\|_{F^{s-1}_{p,r}}\\
&\leq C2^{-N}\|S_{N}u_0-u_0\|_{F^{s}_{p,r}},
\end{align*}
which implies
\bal\label{snu0-u0}
\|\mathbf{S}_{t}(S_Nu_0)-\mathbf{S}_{t}(u_0)\|_{F^s_{p,r}}&\leq C\|S_Nu_0-u_0\|_{F^s_{p,r}}.
\end{align}
Then, using the triangle inequality and \eqref{snu0-u0}, we have for $u_0,\widetilde{u}_0\in F^s_{p,r}$,
\bbal
\|\mathbf{S}_{t}(u_0)-\mathbf{S}_{t}(\widetilde{u}_0)\|_{F^s_{p,r}}
&\leq \|\mathbf{S}_{t}(S_Nu_0)-\mathbf{S}_{t}(u_0)\|_{F^s_{p,r}}+\|\mathbf{S}_{t}(S_N\widetilde{u}_0)-\mathbf{S}_{t}(\widetilde{u}_0)\|_{F^s_{p,r}}\\
&~~+\|\mathbf{S}_{t}(S_Nu_0)-\mathbf{S}_{t}(S_N\widetilde{u}_0)\|_{F^s_{p,r}}
\\&\leq C\|S_Nu_0-u_0\|_{F^s_{p,r}}+C\|S_N\widetilde{u}_0-\widetilde{u}_0\|_{F^s_{p,r}}+C\|\mathbf{S}_{t}(S_Nu_0)-\mathbf{S}_{t}(S_N\widetilde{u}_0)\|_{F^s_{p,r}}
\\
&:=\mathbf{I}_1+\mathbf{I}_2+\mathbf{I}_3.
\end{align*}
By the interpolation inequality, one has
\bbal
\mathbf{I}_3&\leq C\|\mathbf{S}_{t}(S_Nu_0)-\mathbf{S}_{t}(S_N\widetilde{u}_0)\|_{B^s_{p,1}}\\&\leq C\|\mathbf{S}_{t}(S_Nu_0)-\mathbf{S}_{t}(S_N\widetilde{u}_0)\|^{\fr12}_{B^{s-1}_{p,\infty}}
\|\mathbf{S}_{t}(S_Nu_0)-\mathbf{S}_{t}(S_N\widetilde{u}_0)\|^{\fr12}_{B^{s+1}_{p,\infty}}\\
&\leq C\|S_Nu_0-S_N\widetilde{u}_0\|^{\fr12}_{B^{s-1}_{p,\infty}}
\|\mathbf{S}_{t}(S_Nu_0)-\mathbf{S}_{t}(S_N\widetilde{u}_0)\|^{\fr12}_{B^{s+1}_{p,\infty}}\leq C2^{\frac N2}\|u_0-\widetilde{u}_0\|^{\frac12}_{F^{s-1}_{p,r}},
\end{align*}
which clearly implies
\bbal
\|\mathbf{S}_{t}(u_0)-\mathbf{S}_{t}
(\widetilde{u}_0)\|_{F^s_{p,r}}
&\lesssim \|S_Nu_0-u_0\|_{F^s_{p,r}}+
\|S_N\widetilde{u}_0-\widetilde{u}_0\|_{F^s_{p,r}}+2^{\frac N2}\|u_0-\widetilde{u}_0\|^{\frac12}_{F^{s-1}_{p,r}}
\\&\lesssim \|S_Nu_0-u_0\|_{F^s_{p,r}}+
\|u_0-\widetilde{u}_0\|_{F^s_{p,r}}+2^{\frac N2}\|u_0-\widetilde{u}_0\|^{\frac12}_{F^{s}_{p,r}}.
\end{align*}
Due to $1<p,r<\infty$, $\forall \varepsilon>0$, one can select $N$ to be sufficiently large, such that
$$C\|S_Nu_0-u_0\|_{F^s_{p,r}}\leq \frac{\varepsilon}{3}.$$
Then fix $N$, we choose $\delta$ so small such that $\|u_0-\widetilde{u}_0\|_{F^{s}_{p,r}}<\delta$ satisfying  $C2^{\frac N2}\delta^{\fr12}<\frac{\varepsilon}{3}$ and $C\delta<\frac{\varepsilon}{3}$. Hence,
\bbal
\|\mathbf{S}_{t}(u_0)-\mathbf{S}_{t}
(\widetilde{u}_0)\|_{F^s_{p,r}}<\varepsilon.
\end{align*}
This completes the proof of continuous dependence.

\textbf{Step 6. Non-uniform continuous dependence}

\begin{proposition}\label{pro 3.1}
Assume that $s>\max\{1+\frac{1}{p}, \frac{3}{2}\}$ with $1< p, r< \infty$ and $\|u_0\|_{F^s_{p,r}}\lesssim 1$. Then under the assumptions of Theorem \ref{the1}, we have
\bbal
\|\mathbf{S}_{t}(u_0)-u_0+t\mathbf{U}_0\|_{F^{s}_{p,r}}\lesssim t^{2}\left[1+\|u_0\|_{F^{s-1}_{p,r}}\Big(\|u_0\|_{F^{s+1}_{p,r}}+\|u_0\|_{F^{s-1}_{p,r}}\|u_0\|_{F^{s+2}_{p,r}}\Big)\right],
\end{align*}
where $\mathbf{U}_0=u_0\cd\na u_0-Q(u_0,u_0)-R(u_0,u_0)$.
\end{proposition}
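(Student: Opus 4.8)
\textbf{Proof proposal for Proposition \ref{pro 3.1}.}

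The plan is to estimate the error of the first-order Taylor expansion of the flow at time $t=0$. Write $u(t)=\mathbf{S}_t(u_0)$ and set
\[
z(t):=u(t)-u_0+t\mathbf{U}_0,
\]
so that $z(0)=0$ and, from \eqref{E-P1}, $\pa_t u=-u\cd\na u+P(u,u)$, hence $\pa_t z=\pa_t u+\mathbf{U}_0=\big(-u\cd\na u+P(u,u)\big)-\big(-u_0\cd\na u_0+P(u_0,u_0)\big)$. The first step is therefore to rewrite this as a transport equation for $z$: adding and subtracting $u\cd\na u_0$ and $P(u,u_0)$ (or splitting $P=Q+R$ bilinearly), one gets
\[
\pa_t z+u\cd\na z = g(t),
\]
where $g$ collects the "difference" terms $-(u-u_0)\cd\na u_0$, $Q(u-u_0,u)+Q(u_0,u-u_0)$ and the analogous $R$-terms, plus the harmless term $u\cd\na(t\mathbf{U}_0)$ coming from applying the transport operator to the explicit correction. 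Here I would use that $u-u_0$ is small: by the continuous-dependence machinery of Step 4/Step 5 (the inequality \eqref{y2} with $v\equiv0$, or directly Lemma \ref{le-tr-t}), one has $\|u(t)-u_0\|_{F^{s}_{p,r}}\lesssim t$ and, crucially at one derivative higher, $\|u(t)\|_{F^{s+1}_{p,r}}\lesssim \|u_0\|_{F^{s+1}_{p,r}}$, $\|u(t)\|_{F^{s+2}_{p,r}}\lesssim\|u_0\|_{F^{s+2}_{p,r}}$ on $[0,T]$ (these follow from Lemma \ref{le-tr-t} applied at regularity $s+1$ and $s+2$, together with the bilinear bounds on $Q,R$ from Lemmas \ref{em-infi}--\ref{le-pro-es1} and Lemma \ref{le-op}).

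The second step is to bound $\|z(t)\|_{F^s_{p,r}}$ by applying the transport estimate from Lemma \ref{le-tr-t} (the last, $s>1+d/p$ form, with $v=u$) to $\pa_t z+u\cd\na z=g$ with $z(0)=0$:
\[
\|z(t)\|_{F^s_{p,r}}\lesssim \int_0^t\|g(\tau)\|_{F^s_{p,r}}\,\dd\tau\cdot e^{C\widetilde V(t)},
\]
and $e^{C\widetilde V(t)}\lesssim1$ on $[0,T]$ since $\|u\|_{F^s_{p,r}}\lesssim1$. So everything reduces to showing $\int_0^t\|g(\tau)\|_{F^s_{p,r}}\,\dd\tau\lesssim t^2[\cdots]$. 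For the bilinear difference terms I would use the product law (Lemma \ref{le-pro-es1}) and Lemma \ref{le-op} to peel off the two derivatives in $Q$ and the one in $R$; schematically $\|Q(u-u_0,u)\|_{F^s_{p,r}}\lesssim\|u-u_0\|_{F^{s}_{p,r}}\|u\|_{F^{s+1}_{p,r}}$ — wait, one must be careful that $Q$ is order $-2$ applied to two gradients, net $0$, so actually $\|Q(a,b)\|_{F^s_{p,r}}\lesssim\|a\|_{F^{s}_{p,r}}\|b\|_{F^{s}_{p,r}}$ suffices for the leading term but the $t^2$ gain forces us to use $\|u-u_0\|_{F^{s}_{p,r}}\lesssim t\|u_0\|_{F^{s+1}_{p,r}}$ (from Lemma \ref{le-tr-t} at level $s$, whose right side involves $\|P(u_0,u_0)\|_{F^s_{p,r}}\lesssim\|u_0\|_{F^{s}_{p,r}}\|u_0\|_{F^{s+1}_{p,r}}$ and $\|u_0\cd\na u_0\|_{F^{s}_{p,r}}\lesssim\|u_0\|_{F^{s}_{p,r}}\|u_0\|_{F^{s+1}_{p,r}}$). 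Similarly the transport difference $(u-u_0)\cd\na u_0$ contributes $\|u-u_0\|_{F^{s}_{p,r}}\|u_0\|_{F^{s+1}_{p,r}}\lesssim t\|u_0\|_{F^{s+1}_{p,r}}^2$, and the explicit term $u\cd\na(t\mathbf{U}_0)$ gives $t\|u\|_{F^{s}_{p,r}}\|\mathbf{U}_0\|_{F^{s+1}_{p,r}}\lesssim t(\|u_0\|_{F^{s+1}_{p,r}}^2+\|u_0\|_{F^{s+1}_{p,r}}\|u_0\|_{F^{s+2}_{p,r}})$ — tracking exactly how $\mathbf{U}_0=u_0\cd\na u_0-Q(u_0,u_0)-R(u_0,u_0)$ costs one extra derivative. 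Integrating the bound $\|g(\tau)\|_{F^s_{p,r}}\lesssim \tau[\cdots]+[\text{terms already }O(\tau)]$ over $[0,t]$ then yields the $t^2$ factor, and the bracket is organized so that the lowest-order coefficient is $\|u_0\|_{F^{s-1}_{p,r}}$ times either $\|u_0\|_{F^{s+1}_{p,r}}$ or $\|u_0\|_{F^{s-1}_{p,r}}\|u_0\|_{F^{s+2}_{p,r}}$; here I would invoke a finer product estimate (of Lemma \ref{le-pro-es2} type, splitting into low–high paraproducts) to replace one $F^s$-factor by $F^{s-1}$ at the cost of raising the other, since the statement explicitly demands the $F^{s-1}$ norm rather than $F^s$.

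The main obstacle, I expect, is precisely this bookkeeping of derivative counts: making sure that each nonlinear term in $g$ is estimated so that (i) exactly one power of $t$ is extracted (either from $\|u-u_0\|\lesssim t$ or from the explicit $t\mathbf{U}_0$ term), and (ii) the spatial norms that remain fit the asymmetric pattern $\|u_0\|_{F^{s-1}_{p,r}}(\|u_0\|_{F^{s+1}_{p,r}}+\|u_0\|_{F^{s-1}_{p,r}}\|u_0\|_{F^{s+2}_{p,r}})$ on the right-hand side — in particular getting the $F^{s-1}$ rather than $F^s$ requires the sharp paraproduct/commutator form of the product law and the operator bounds of Lemma \ref{le-op}, and one must check that the hypothesis $s>\max\{1+\frac1p,\frac32\}$ (note: with $d=1$ here, matching the CH case) is exactly what is needed for those product laws to hold at the shifted indices $s-1$, $s$, $s+1$. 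A secondary technical point is justifying that $u\in\mathcal{C}([0,T];F^{s+2}_{p,r})$ when $u_0\in F^{s+2}_{p,r}$, so that all the norms appearing on the right are finite; this is again just Lemma \ref{le-tr-t} applied at the higher regularity level, using persistence of regularity, but it should be stated before the computation.
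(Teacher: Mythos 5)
Your overall architecture is sound and close to the paper's: both treat $u(t)-u_0+t\mathbf{U}_0=\int_0^t(\pa_\tau u+\mathbf{U}_0)\,\dd\tau$ as a first-order Taylor remainder and gain the factor $t^2$ by showing the integrand is $O(\tau)$ via preliminary bounds on $u(\tau)-u_0$. The paper does this by direct integration (no transport equation for $z$ is ever set up), estimating $\|Q(u,u)-Q(u_0,u_0)\|_{F^s_{p,r}}+\|R(u,u)-R(u_0,u_0)\|_{F^s_{p,r}}\lesssim\|u-u_0\|_{F^s_{p,r}}$ and $\|u\cd\na u-u_0\cd\na u_0\|_{F^s_{p,r}}\lesssim\|(u-u_0)(u+u_0)\|_{F^{s+1}_{p,r}}$; your detour through a transport equation for $z$ with source $g$ and Lemma \ref{le-tr-t} is workable but adds the term $u\cd\na(t\mathbf{U}_0)$ and buys nothing, since $z(0)=0$ and no smallness from the transport structure is needed.

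The genuine gap is in the quantitative bookkeeping that you yourself flag as ``the main obstacle'' and then do not resolve. Several of your displayed bounds are symmetric where the result requires asymmetry: you use $\|u(\tau)-u_0\|_{F^s_{p,r}}\lesssim\tau\|u_0\|_{F^{s+1}_{p,r}}$ and $\|u_0\cd\na u_0\|_{F^s_{p,r}}\lesssim\|u_0\|_{F^s_{p,r}}\|u_0\|_{F^{s+1}_{p,r}}$, which feed forward to terms of size $t^2\|u_0\|_{F^{s+1}_{p,r}}$ or $t^2\|u_0\|^2_{F^{s+1}_{p,r}}$. These do not reduce to the stated bracket $1+\|u_0\|_{F^{s-1}_{p,r}}\big(\|u_0\|_{F^{s+1}_{p,r}}+\|u_0\|_{F^{s-1}_{p,r}}\|u_0\|_{F^{s+2}_{p,r}}\big)$, and in the subsequent application the data satisfy $\|u^n_0\|_{F^{s+k}_{p,r}}\sim 2^{kn}$, so a stray $\|u_0\|^2_{F^{s+1}_{p,r}}\sim 2^{2n}$ destroys the conclusion. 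The fix is not a ``finer paraproduct estimate'' to be invoked later: it is already in the paper's toolkit. One uses Lemma \ref{le-pro-es1} in its $L^\infty$--$F^s$ form together with $\|f\|_{L^\infty}\lesssim\|f\|_{B^{s-1}_{p,\infty}}\lesssim\|f\|_{F^{s-1}_{p,r}}$ (Lemma \ref{em-infi} and Remark \ref{re-bt}, valid since $s-1>\frac dp$), so that every product pairs a \emph{low} norm ($F^{s-1}_{p,r}$ via $L^\infty$) with a \emph{high} norm ($F^{s+1}_{p,r}$ or $F^{s+2}_{p,r}$). This forces you to prove the first-order difference bounds at three separate levels before the final step, namely $\|u(\tau)-u_0\|_{B^{s-1}_{p,\infty}}\lesssim\tau\|u_0\|_{F^{s-1}_{p,r}}$, $\|u(\tau)-u_0\|_{F^{s}_{p,r}}\lesssim\tau(1+\|u_0\|_{F^{s-1}_{p,r}}\|u_0\|_{F^{s+1}_{p,r}})$ and $\|u(\tau)-u_0\|_{F^{s+1}_{p,r}}\lesssim\tau(\|u_0\|_{F^{s+1}_{p,r}}+\|u_0\|_{F^{s-1}_{p,r}}\|u_0\|_{F^{s+2}_{p,r}})$ (the paper's \eqref{zyl1}--\eqref{zyl3}), the first of which is entirely absent from your plan but is exactly what controls $\|u-u_0\|_{L^\infty}$ in the transport difference. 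Your closing remarks about persistence of regularity at levels $s+1$, $s+2$ are correct and match \eqref{u-estimate}.
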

\begin{proof}\; For simplicity, denote $u(t)=\mathbf{S}_t(u_0)$. Firstly, by the local well-posedness result, there exists a positive time $T=T(\|u_0\|_{F^s_{p,r}},s,p,r)$ such that the solution $u(t)$ belongs to $\mathcal{C}([0, T];  F_{p, r}^s)$. Moreover,  for all $t\in[0,T]$ and $\gamma\geq s$,  we have
\bal\label{u-estimate}
\|u(t)\|_{B^{s-1}_{p,\infty}}\leq C\|u_0\|_{B^{s-1}_{p,\infty}},\quad \|u(t)\|_{F^\gamma_{p,r}}\leq C\|u_0\|_{F^\gamma_{p,r}}.
\end{align}

Now we shall estimate the Triebel-Lizorkin norm and Besov norm of the term $u(t)-u_0$, separately. By the Mean Value Theorem, Remark \ref{re-bt} and Lemmas \ref{em-infi}-\ref{le-pro-es1}, we obtain
\bal\label{zyl1}
\|u(t)-u_0\|_{F^s_{p,r}}
&\leq \int^t_0\|\pa_\tau u\|_{F^s_{p,r}} \dd\tau\nonumber\\
&\leq \int^t_0\|Q(u,u)\|_{F^s_{p,r}} \dd\tau+\int^t_0\|R(u,u)\|_{F^s_{p,r}} \dd\tau+ \int^t_0\|u \cd\na u\|_{F^s_{p,r}} \dd\tau\nonumber\\
&\lesssim t\left(\|u\|^2_{L_t^\infty(F^{s}_{p,r})}
+\|u\|_{L_t^\infty(L^\infty)}\|\na u\|_{L_t^\infty(F^{s}_{p,r})}+\|\na u\|_{L_t^\infty(L^\infty)}\|u\|_{L_t^\infty(F^{s}_{p,r})}\right)\nonumber\\
&\lesssim t\left(\|u_0\|^2_{F^{s}_{p,r}}+\|u_0\|_{B^{s-1}_{p,\infty}}\|u_0\|_{F^{s+1}_{p,r}}\right)\nonumber\\
&\lesssim t\left(1+\|u_0\|_{F^{s-1}_{p,r}}\|u_0\|_{F^{s+1}_{p,r}}\right),
\end{align}
where the third inequality follows from the fact that $F_{p, r}^{s-1}$ is a Banach algebra with $s-1>\max\{\frac{1}{p}, \frac{1}{2}\}$.
Taking the same argument as above, by Remark \ref{re-bt} and Lemmas \ref{em-infi}-\ref{le-pro-es2}, one can deduce that
\bal\label{zyl2}
\|u(t)-u_0\|_{B^{s-1}_{p,\infty}}
&\leq \int^t_0\|\pa_\tau u\|_{B^{s-1}_{p,\infty}} \dd\tau
\nonumber\\&\les \int^t_0\|Q(u,u)\|_{B^{s-1}_{p,\infty}} \dd\tau+\int^t_0\|R(u,u)\|_{B^{s-1}_{p,\infty}} \dd\tau+ \int^t_0\|u\cd\na u\|_{B^{s-1}_{p,\infty}} \dd\tau\nonumber\\&\leq \int^t_0\|\na u\|_{B^{s-2}_{p,\infty}}\|\na u\|_{B^{s-1}_{p,\infty}} \dd\tau+\int^t_0\|\na u\|_{B^{s-2}_{p,\infty}}\| u\|_{B^{s-1}_{p,\infty}}  \dd\tau+ \int^t_0\|u\|_{B^{s-1}_{p,\infty}}\|u\|_{B^{s}_{p,\infty}} \dd\tau
\nonumber\\&\lesssim t\|u_0\|_{F^{s-1}_{p,r}}
\end{align}
and
\bal\label{zyl3}
\|u(t)-u_0\|_{F^{s+1}_{p,r}}
&\leq \int^t_0\|\pa_\tau u\|_{F^{s+1}_{p,r}} \dd\tau
\nonumber\\&\leq \int^t_0\|Q(u,u)\|_{F^{s+1}_{p,r}} \dd\tau+\int^t_0\|R(u,u)\|_{F^{s+1}_{p,r}} \dd\tau+ \int^t_0\|u\cd\na u\|_{F^{s+1}_{p,r}} \dd\tau\nonumber\\
&\lesssim t\left(\|\na u,u\|_{L_t^\infty(L^\infty)}\|\na u\cdot u\|_{L_t^\infty(F^{s}_{p,r})}
+\|u\|_{L_t^\infty(L^\infty)}\|\na u\|_{L_t^\infty(F^{s+1}_{p,r})}+\|\na u\|_{L_t^\infty(L^\infty)}\|u\|_{L_t^\infty(F^{s}_{p,r})}\right)
\nonumber\\&\lesssim t\left(\|u_0\|_{F^{s+1}_{p,r}}+\|u_0\|_{F^{s-1}_{p,r}}\|u_0\|_{F^{s+2}_{p,r}}\right).
\end{align}

Next, we estimate the Triebel-Lizorkin norm for the term $u(t)-u_0+t\mathbf{U}_0$. Applying the Mean Value Theorem again, it follows from the estimates \eqref{zyl1}-\eqref{zyl3} that
\bbal
\|u(t)-u_0+t\mathbf{U}_0\|_{F^s_{p,r}}
&\leq \int^t_0\|\partial_\tau u+\mathbf{U}_0\|_{F^s_{p,r}} \dd\tau \\
&\leq \int^t_0\|Q(u,u)-Q(u_0,u_0)\|_{F^s_{p,r}} \dd\tau+\int^t_0\|R(u,u)-R(u_0,u_0)\|_{F^s_{p,r}} \dd\tau \\
&\quad+\int^t_0\|u\cd\na u-u_0\cd\na u_0\|_{F^s_{p,r}}\dd\tau\\
&\lesssim
t^{2}\left[1+\|u_0\|_{F^{s-1}_{p,r}}\Big(\|u_0\|_{F^{s+1}_{p,r}}+\|u_0\|_{F^{s-1}_{p,r}}\|u_0\|_{F^{s+2}_{p,r}}\Big)\right],
\end{align*}
where we have used
\begin{align*}
\|Q(u,u)-Q(u_0,u_0)\|_{F^s_{p,r}} +\|R(u,u)-R(u_0,u_0)\|_{F^s_{p,r}}&\les\|u-u_0\|_{F^s_{p,r}}
\end{align*}
and
\begin{align*}
\|u\cd\na u-u_0\cd\na u_0\|_{F_{p, r}^s}&\les\|u^2-u^2_0\|_{F_{p, r}^{s+1}}\les\|(u-u_0)(u+u_0)\|_{F_{p, r}^{s+1}}
\\
&\les\|u-u_0\|_{L^\infty}\|u_0,u\|_{F_{p, r}^{s+1}}+\|u-u_0\|_{F_{p, r}^{s+1}}\|u_0,u\|_{L^\infty}\\
&\les\|u-u_0\|_{B_{p, \infty}^{s-1}}\|u_0\|_{F_{p, r}^{s+1}}+\|u-u_0\|_{F_{p, r}^{s+1}}\|u_0\|_{F_{p, r}^{s-1}}.
   \end{align*}
Thus, we finish the proof of Proposition \ref{pro 3.1}.
\end{proof}

Let $\hat{\phi}$ be in $C_0(\mathbb{R})$ such that
\begin{equation*}
\hat{\phi}(\xi)=
\begin{cases}
1, \quad |\xi|\leq \frac{1}{4^d},\\
0, \quad |\xi|\geq \frac{1}{2^d}.
\end{cases}
\end{equation*}
First, we choose velocities $u^n_0$ and $v^n_0$ with the following forms:
\bbal
u^n_0=
\Big(f_n,0,\cdots,0\Big), \qquad v^n_0=
\Big(f_n+g_n,0,\cdots,0\Big),
\end{align*}
with
\bbal
&f_n(x)=
2^{-ns}\phi(x_1)\sin(\frac{17}{12}2^nx_1)\phi(x_2)\cdots \phi(x_d),
\qquad n \in \mathbb{Z},
\\&g_n(x)=
2^{-n}\phi(x_1)\phi(x_2)\cdots \phi(x_d),
\qquad n \in \mathbb{Z}.
\end{align*}
An easy computation gives that
\bbal
\hat{f}_n=2^{-ns-1}i\big[\hat{\phi}
\big(\xi_1+\frac{17}{12}2^n\big)-\hat{\phi}\big(\xi_1-\frac{17}{12}2^n\big)\big]
\hat{\phi}(\xi_2)\cdots \hat{\phi}(\xi_d),
\end{align*}
which implies
\bbal
\mathrm{supp} \ \hat{f}_n\subset \Big\{\xi\in\R^d: \ \frac{17}{12}2^n-\fr12\leq |\xi|\leq \frac{17}{12}2^n+\fr12\Big\}.
\end{align*}
Then, we deduce that
\bbal
\Delta_j(f_n)=
\begin{cases}
f_n, \quad  j=n,\\
0, \qquad j\neq n.
\end{cases}
\end{align*}
According to Definition \ref{de-t}, one can show that for $k\geq -1$,
\bbal
||u^n_0||_{F^{s+k}_{p,r}}\leq C2^{kn}, \quad ||v^n_0||_{F^{s+k}_{p,r}}\leq C2^{kn}, \quad ||v^n_0,\na v^n_0||_{L^\infty}\leq C(2^{-n}+2^{-(s-1)n}).
\end{align*}
Let $\mathbf{S}_{t}(u^n_0)$ and $\mathbf{S}_{t}(v^n_0)$ be the solution of \eqref{E-P1} with initial data $u^n_0$ and $v^n_0$, respectively. Obviously, we have
\bal\label{es-gn}
\|u^n_0-v^n_0\|_{F^s_{p,r}}=\|g_n\|_{F^s_{p,r}}\leq C2^{-n},
\end{align}
and thus
\bbal
\lim_{n\to\infty}\|u^n_0-v^n_0\|_{F^s_{p,r}}=0.
\end{align*}
It is easy to show that for $\sigma\geq s$,
\bbal
&\|u^n_0\|_{F^{s-1}_{p,r}}\lesssim 2^{-n},\quad\|v^n_0\|_{F^{s-1}_{p,r}}\lesssim 2^{-n},\qquad \|u^n_0,v^n_0\|_{F^{\sigma}_{p,r}}\leq C2^{(\sigma-s)n}.
\end{align*}
Therefore,
\bbal
&\|v_0^n\|_{F^{s-1}_{p,r}}\Big(\|v_0^n\|_{F^{s+1}_{p,r}}+\|v_0^n\|_{F^{s-1}_{p,r}}\|v_0^n\|_{F^{s+2}_{p,r}}\Big)\lesssim  1,
\\&\|u_0^n\|_{F^{s-1}_{p,r}}\Big(\|u_0^n\|_{F^{s+1}_{p,r}}+\|u_0^n\|_{F^{s-1}_{p,r}}\|u_0^n\|_{F^{s+2}_{p,r}}\Big)\lesssim  1.
\end{align*}
From Remark \ref{re-bt} and Lemmas \ref{em-infi}-\ref{le-pro-es1}, it follows that
\bal\label{zhong-R}
\|Q(u^n_0,u^n_0)-Q(v^n_0,v^n_0)\|_{F^s_{p,r}}+\|R(u^n_0,u^n_0)-R(v^n_0,v^n_0)\|_{F^s_{p,r}}\leq C||u^n_0-v^n_0||_{F^s_{p,r}}(||u^n_0||_{F^s_{p,r}}+||v^n_0||_{F^s_{p,r}}).
\end{align}
By Proposition \ref{pro 3.1}, we deduce from \eqref{es-gn}-\eqref{zhong-R} that
\bal\label{yyh}
&\quad \ \|\mathbf{S}_{t}(u^n_0)-\mathbf{S}_{t}(v^n_0)\|_{F^s_{p,r}}
\nonumber \\ \geq& \|u^n_0-tu^n_0\cd\na u^n_0-tQ(u^n_0,u^n_0)-tR(u^n_0,u^n_0)-\Big(v^n_0-tv^n_0\cd\na v^n_0-tQ(v^n_0,v^n_0)-tR(v^n_0,v^n_0)\Big)||_{F^s_{p,r}}-Ct^2
\nonumber\\ \geq &~t\left\|u^n_{0}\cd \na u^n_{0}-v^n_{0}\cd \na v^n_{0}\right\|_{F^s_{p,r}}-\|u^n_0-v^n_0\|_{F^s_{p,r}}
\nonumber \\& \qquad -t\left\|Q(u^n_0,u^n_0)-Q(v^n_0,v^n_0)\right\|_{F^s_{p,r}}-t\left\|R(u^n_0,u^n_0)-R(v^n_0,v^n_0)\right\|_{F^s_{p,r}}-Ct^2\nonumber\\
\geq&~ t\left\|u^n_{0}\cd \na u^n_{0}-v^n_{0}\cd \na v^n_{0}\right\|_{F^s_{p,r}}-C2^{-n}-Ct^{2}.
\end{align}
As $\big(v^n_0\cd\na v^n_0-u^n_0\cd\na u^n_0\big)_i=0$ for $i=2,\cdots d$, and
\begin{align}\label{397}
\big(v^n_0\cd\na v^n_0-u^n_0\cd\na u^n_0\big)_1=f_n\cd\pa_{x_1} g_n+g_n\cd\pa_{x_1} f_n+g_n\cd\pa_{x_1} g_n.
\end{align}
By the definitions of $f_n$ and $g_n,$ we have
\bbal
&\quad \ ||g_n\cd\pa_{x_1} g_n,f_n\cd\pa_{x_1} g_n||_{F^s_{p,r}}
\\&\leq C||f_n||_{L^\infty}||g_n||_{F^{s+1}_{p,r}}+C||\na g_n||_{L^\infty}||f_n||_{F^s_{p,r}}+C||g_n||^2_{F^{s+1}_{p,r}}
\leq C2^{-\frac12n},
\end{align*}
which along with \eqref{yyh} and \eqref{397} implies
\bal\label{1000}
\|\mathbf{S}_{t}(u^n_0)-\mathbf{S}_{t}(v^n_0)\|_{F^s_{p,r}}\geq ct||g_n\cd\pa_{x_1} f_n||_{F^s_{p,r}}-Ct^2-C2^{-\frac12n}.
\end{align}
Note that
\bbal
\mathrm{supp} \ \mathcal{F}({g_n\cd\pa_{x_1} f_n})\subset \Big\{\xi\in\R^d: \ \frac{17}{12}2^n-1\leq |\xi|\leq \frac{17}{12}2^n+1\Big\}.
\end{align*}
Thus,
\bbal
\Delta_j(g_n\cd\pa_{x_1} f_n)=
\begin{cases}
g_n\cd\pa_{x_1} f_n, \quad  j=n,\\
0, \qquad j\neq n.
\end{cases}
\end{align*}
Consequently,
\bal\label{1001}
||g_n\cd\pa_{x_1} f_n||_{F^s_{p,r}}&=2^{ns}||g_n\cd\pa_{x_1} f_n||_{L^p}\nonumber
\\&\geq ||\phi^2(x_1)\cos(\frac{17}{12}2^nx_1)\phi^2(x_2)\cdots \phi^2(x_d)||_{L^p} \nonumber \\& \quad -2^{-n}||\phi(x_1)\phi'(x_1)\sin(\frac{17}{12}2^nx_1)\phi^2(x_2)\cdots \phi^2(x_d)||_{L^p} \nonumber
\\&\geq \frac{17}{12}||\phi^2(x_1)\cos(\frac{17}{12}2^nx_1)||_{L^p}||\phi||^{2(d-1)}_{L^{2p}}-C2^{-n}.
\end{align}
Using Riemann-Lebesgue's lemma, we get
\begin{eqnarray*}
      \liminf_{n\rightarrow \infty} \|g_n\pa_{x_1}f_n\|_{F^s_{p,r}}\geq c,
        \end{eqnarray*}
which along with \eqref{1000} yields
\bbal
\liminf_{n\rightarrow \infty}\|\mathbf{S}_{t}(u^n_0)-\mathbf{S}_{t}(v^n_0)\|_{F^s_{p,r}}\gtrsim t\quad\text{for} \ t \ \text{small enough}.
\end{align*}
This completes the proof of non-uniform continuous dependence.

\section*{Acknowledgements}
M. Li is supported by the Jiangxi Provincial Natural Science Foundation (20232BAB201013). J.
Li is supported by the National Natural Science Foundation of China (12161004), Training Program
for Academic and Technical Leaders of Major Disciplines in Ganpo Juncai Support Program
(20232BCJ23009) and Jiangxi Provincial Natural Science Foundation (20224BAB201008).

\section*{Declarations}
\noindent\textbf{Data Availability} No data was used for the research described in the article.

\vspace*{1em}
\noindent\textbf{Conflict of interest}
The authors declare that they have no conflict of interest.

\end{document}